\title{Global transfer systems of abelian compact Lie groups}
\author{Miguel Barrero}
\address{IMAPP, Radboud University Nijmegen, The Netherlands}
\email{m.barrero@math.ru.nl}
\newcommand{\adjunction}[4]{%
  #1\colon #2%
  \mathrel{\vcenter{%
    \offinterlineskip\m@th
    \ialign{%
      \hfil$##$\hfil\cr
      \longrightharpoonup\cr
      \noalign{\kern-.3ex}
      \smallbot\cr
      \longleftharpoondown\cr
    }%
  }}%
  #3 \noloc #4%
}
\newcommand{\longrightharpoonup}{\relbar\joinrel\rightharpoonup}
\newcommand{\longleftharpoondown}{\leftharpoondown\joinrel\relbar}
\newcommand\noloc{%
  \nobreak
  \mspace{6mu plus 1mu}
  {:}
  \nonscript\mkern-\thinmuskip
  \mathpunct{}
  \mspace{2mu}
}
\newcommand{\smallbot}{%
  \begingroup\setlength\unitlength{.15em}%
  \begin{picture}(1,1)
  \roundcap
  \polyline(0,0)(1,0)
  \polyline(0.5,0)(0.5,1)
  \end{picture}%
  \endgroup
}
\tikzset{double line with arrow/.style args={#1,#2}{decorate,decoration={markings,%
mark=at position 0 with {\coordinate (ta-base-1) at (0,1pt);
\coordinate (ta-base-2) at (0,-1pt);},
mark=at position 1 with {\draw[#1] (ta-base-1) -- (0,1pt);
\draw[#2] (ta-base-2) -- (0,-1pt);
}}}}
\tikzset{Equal/.style={-,double line with arrow={-,-}}}
\newtheorem{thm}{Theorem}[section]
\newtheorem{lemm}[thm]{Lemma}
\newtheorem{prop}[thm]{Proposition}
\newtheorem*{thmintro}{Theorem}
\theoremstyle{definition}
\newtheorem{defi}[thm]{Definition}
\theoremstyle{remark}
\newtheorem{constr}[thm]{Construction}
\theoremstyle{remark}
\theoremstyle{remark}
\newtheorem{rem}[thm]{Remark}
\theoremstyle{remark}
\DeclareMathAlphabet{\mathpzc}{OT1}{pzc}{m}{it}
\newcommand{\id}{\mathrm{id}}
\newcommand{\NN}{\mathbb{N}}
\newcommand{\ZZ}{\mathbb{Z}}
\newcommand{\opp}{\mathrm{op}}
\newcommand{\Nin}{N_\infty}
\newcommand{\IGc}{\overline{I}_G}
\newcommand{\All}{\mathpzc{All}}
\newcommand{\fami}{\mathpzc{F}}
\newcommand{\eami}{\mathpzc{E}}
\newcommand{\dami}{\mathpzc{D}}
\newcommand{\Ab}{\mathpzc{Ab}}
\newcommand{\Lie}{\mathpzc{Lie}}
\newcommand{\Abp}{{\Ab_p}}
\newcommand{\Abq}{{\Ab_q}}
\newcommand{\Cyc}{\mathpzc{Cyc}}
\newcommand{\Cycp}{{\Cyc_p}}
\newcommand{\AbLie}{\mathpzc{AbLie}}
\DeclareMathOperator{\Subg}{Sub}
\newcommand{\Gfam}{\mathpzc{G}}
\begin{document}
\begin{abstract}
Global transfer systems are equivalent to global $N_\infty$-operads, which parametrize different levels of commutativity in globally equivariant homotopy theory, where objects have compatible actions by all compact Lie groups. In this paper we explicitly describe and completely classify global transfer systems for the family of all abelian compact Lie groups.
\end{abstract}
\maketitle
\tableofcontents

\section{Introduction}

In this paper we introduce global transfer systems for a family of compact Lie groups. A global transfer system is an algebraic construction, a coherent choice of pairs of groups. Explicitly, a global transfer system for a family $\fami$ of compact Lie groups is a collection of pairs $(K, H)$ with $H$ belonging to the chosen family $\fami$, and $K$ a finite index closed subgroup of $H$. This collection is required to form a partial order on $\fami$, and to be closed under pullback along arbitrary continuous homomorphisms of compact Lie groups.

This notion of global transfer systems might be of independent interest, and to our knowledge, has not appeared in the literature before. This paper uses algebraic techniques to study these global transfer systems, as we consider the structure of the involved groups, and homomorphisms between them. The motivation for our interest in global transfer systems comes from equivariant homotopy theory, as we explain in more detail below. However, the reader is not expected to be familiar with equivariant homotopy theory.

Recently, in \cite{globalNin}, we defined global transfer systems, and we proved that the partially ordered set of global transfer systems for the family of all compact Lie groups is equivalent to the homotopy category of \emph{global $\Nin$-operads}. These different global $\Nin$-operads represent different levels of commutativity in global homotopy theory.

\subsection*{Global homotopy theory}

Global homotopy theory studies spaces and spectra which have compatible actions by all compact Lie groups. Each global space or global spectrum $X$ has an underlying $H$-space or $H$-spectrum $X_H$ for each compact Lie group $H$. The globally equivariant structure on $X$ also consists of compatible $K$-equivariant maps from $\alpha^\ast(X_H)$ to $X_K$ for each continuous homomorphism $\alpha \colon K \to H$ of compact Lie groups. Furthermore these $K$-equivariant maps are required to be equivalences whenever $\alpha$ is injective. Here $\alpha^\ast(X_H)$ denotes $X_H$ with the $K$-action obtained by pulling back the $H$-action along $\alpha$.

Linskens, Nardin, and Pol formalized this description of globally equivariant spaces and spectra in \cite{lnpglobal}. They showed that the preexisting models of unstable and stable global homotopy theory (Henriques and Gepner \cite{orbispaces}, and Schwede \cite{global}) are equivalent to a partially lax limit of a certain functor that sends a compact Lie group $G$ to the $\infty$-category of $G$-spaces, or the $\infty$-category of $G$-spectra in the stable case.

Each $G$-space or $G$-spectrum $X$ can also be interpreted as a collection of $H$-spaces or $H$-spectra $X_H$ with $H$ varying through the subgroups of $G$, by restricting the $G$-action to the subgroups. The conditions that these $X_H$ associated to a $G$-space or $G$-spectrum $X$ have to satisfy are weaker than the conditions that globally equivariant objects satisfy. This in particular means that not all $G$-spaces or $G$-spectra can appear as underlying a global space or global spectrum.

Many traditional $G$-equivariant constructions for varying $G$ assemble into a single globally equivariant object, for example real and complex equivariant bordism and equivariant topological $K$-theory. By using this additional structure encoded by the global compatibility, one can obtain strong results in equivariant homotopy theory, such as Hausmann's proof \cite{hausmann2022global} of the equivariant analog of Quillen's theorem on formal group laws for abelian compact Lie groups.

\subsection*{$G$-transfer systems}

The analogs of global transfer systems with respect to a single group $G$ have been previously studied in $G$-equivariant homotopy theory. These are called \emph{$G$-transfer systems}, and they are less restrictive than global transfer systems. This is due to the stronger compatibility between the various $X_H$ underlying a global space or global spectrum, when compared to a $G$-space or $G$-spectrum. Concretely, global transfer systems are required to be closed under pullback along arbitrary homomorphisms, while $G$-transfer systems are only required to be closed under restriction to subgroups and conjugation by elements of $G$.

Considering objects with a multiplication, non-equivariantly we say that a multiplication is $E_\infty$ if it is associative, commutative and unital up to all higher homotopies. In $G$-equivariant homotopy theory there are different non-equivalent analogs of $E_\infty$-multiplications. These are represented by different $\Nin$-operads, introduced by Blumberg and Hill in \cite{BLUMBERG2015658}, and they model different levels of commutativity in the equivariant world. Each $\Nin$-operad has an associated $G$-transfer system. $G$-spectra which are algebras over $\Nin$-operads are all $E_\infty$-ring spectra when one forgets the group actions, but they posses additional structure, encoded by the existence or non-existence of those \emph{norm maps} between their equivariant homotopy groups prescribed by the associated $G$-transfer systems. These assemble into \emph{incomplete Tambara functors} \cite[Theorem~4.14]{tambara}. In the unstable setting this additional structure on algebras is encoded by \emph{transfer maps} which assemble into \emph{incomplete Mackey functors} \cite[Proposition~1.4]{mackey}.

Blumberg and Hill first conjectured that these $\Nin$-operads are completely classified by their associated $G$-transfer systems. The last step of this conjecture was proven separately by Gutiérrez and White \cite{GutierrezWhite}, by Rubin \cite{rubin1}, and by Bonventre and Pereira \cite{BonventrePereira}, who constructed examples of all possible $\Nin$-operads using different methods.

Various authors have studied $G$-transfer systems, and explicitly computed the partially ordered set of $G$-transfer systems for specific groups $G$, see \cite{balchin2021ninftyoperads}, \cite{balchinCpqr}, \cite{Rubin2}, \cite{transfersystemspmqn}, or \cite{transfersduality}. An explicit description of the partially ordered set of $G$-transfer systems is currently unknown for most groups $G$. With this paper, we introduce the analogous study of global transfer systems, which is much more accessible than the study of $G$-transfer systems due to the stronger conditions that global transfer systems have to satisfy.

\subsection*{Content of this paper}

In this paper we introduce global transfer systems for smaller families than the family of all compact Lie groups. For each family $\fami$ of compact Lie groups, the global transfer systems for $\fami$ form a partially ordered set. We explicitly compute the partially ordered set of global transfer systems of abelian compact Lie groups, denoted by $I_\AbLie$, and give an explicit description of these global transfer systems.

To accomplish this, we study the relation between global transfer systems for different families of compact Lie groups. A global transfer system for a family can always be extended to any bigger family. We prove that a global transfer system of abelian compact Lie groups is completely determined by its restriction to finite cyclic groups, and so in this case this extension is unique. This shows that $I_\AbLie$ is isomorphic to $I_\Cyc$ the partially ordered set of global transfer systems of finite cyclic groups, and we prove that the latter splits as a product indexed on the set of prime numbers. 

\begin{thmintro}[Theorem~\ref{thmmain}]
The partially ordered set $I_\AbLie$ of global transfer systems for the family of abelian compact Lie groups is isomorphic to the partially ordered set $I_\Cyc$ of global transfer systems for the family of finite cyclic groups, which is itself isomorphic to the product $(\prod_{p \in P} \NN \cup \{\infty\})^\opp$, where $P$ is the set of prime numbers.
\end{thmintro}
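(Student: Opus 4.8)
The plan is to study the restriction map $r\colon I_\AbLie \to I_\Cyc$ induced by $\Cyc \subseteq \AbLie$, show it is an isomorphism of posets, and compute $I_\Cyc$ directly. I would first handle $I_\Cyc$. Since every finite cyclic group is the product of its primary components and a global transfer system is closed under pullback along the projections and inclusions relating $\prod_p \ZZ/p^{a_p}$ to its factors, a relation $(\prod_p \ZZ/p^{a_p} \le \prod_p \ZZ/p^{b_p})$ lies in $J \in I_\Cyc$ if and only if $(\ZZ/p^{a_p} \le \ZZ/p^{b_p})$ lies in the restriction $J_p$ of $J$ to finite cyclic $p$-groups for every $p$; combined with transitivity this gives $I_\Cyc \cong \prod_p I_{\Cycp}$. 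For a single prime I claim every $J_p$ equals $I_p^{(n)} := \{(\ZZ/p^j \le \ZZ/p^k) : j \ge n \text{ or } j = k\}$, where $n \in \NN \cup \{\infty\}$ is the least $j$ with $(\ZZ/p^j \le \ZZ/p^{j+1}) \in J_p$: the inclusion $J_p \subseteq I_p^{(n)}$ holds because restricting $(\ZZ/p^j \le \ZZ/p^k)$ with $j < k$ to the subgroup $\ZZ/p^{j+1}$ yields $(\ZZ/p^j \le \ZZ/p^{j+1})$, while for the reverse inclusion one generates $I_p^{(n)}$ from the single relation $(\ZZ/p^n \le \ZZ/p^{n+1})$ by pulling back along surjections (to raise the top exponent), composing (to increase the index), and pulling back along multiplication by powers of $p$ (to enlarge the subgroup). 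Checking that each $I_p^{(n)}$ really is a global transfer system and that $I_p^{(n)} \subseteq I_p^{(n')} \iff n \ge n'$ gives $I_{\Cycp} \cong (\NN \cup \{\infty\})^\opp$.

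For the isomorphism $I_\AbLie \cong I_\Cyc$, recall that any $A \in \AbLie$ has the form $T^r \times F$ with $F$ finite abelian, and that a finite-index closed subgroup $K \le A$ must contain the identity component $T^r$, so $K = T^r \times F'$ with $F' \le F$. Given $\hat I \in I_\AbLie$ with $r(\hat I) = J$ having $p$-parts $I_p^{(n_p)}$, I would prove that $(K \le A) \in \hat I$ if and only if $F_{(p)}[p^{n_p}] \subseteq F'_{(p)}$ for every prime $p$, where $F_{(p)}[p^{n_p}]$ denotes the $p^{n_p}$-torsion subgroup of the $p$-part of $F$. The forward implication is a restriction argument: if some $f \in F_{(p)}$ of order at most $p^{n_p}$ is not in $F'$, then restricting $(K \le A)$ to the cyclic subgroup $\langle f \rangle$ produces a pair of the form $(\ZZ/p^{c-b} \le \ZZ/p^c)$ with $c \le n_p$ and $b \ge 1$, which does not lie in $I_p^{(n_p)}$. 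The reverse implication is the main obstacle and is constructive: pullback along the projection $A \to F$ reduces it to the case $r = 0$; the intersection property of transfer systems — that $(K_1 \le A), (K_2 \le A) \in \hat I$ imply $(K_1 \cap K_2 \le A) \in \hat I$, which follows by restricting one pair to the other subgroup and then composing — together with pullback along the primary projections reduces it to $F$ a finite abelian $p$-group; writing $F'$ as an intersection of subgroups with cyclic quotient and using the intersection property again reduces it to $F/F' \cong \ZZ/p^b$; and finally, when $F[p^n] \subseteq F'$ and $F/F' \cong \ZZ/p^b$, one shows the quotient map $F \to \ZZ/p^b$ lifts along $\ZZ/p^{n+b} \twoheadrightarrow \ZZ/p^b$ to a homomorphism $\psi \colon F \to \ZZ/p^{n+b}$ — the obstruction vanishes on each cyclic summand $\ZZ/p^{e_i}$ of $F$ because $F[p^n] \subseteq F'$ forces the restriction of the quotient map to $\ZZ/p^{e_i}$ to have image of order at most $p^{e_i - n}$ — so that $(F' \le F)$ is the pullback of $(\ZZ/p^n \le \ZZ/p^{n+b}) \in I_p^{(n)} \subseteq \hat I$ along $\psi$.

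This characterization shows $\hat I$ is determined by $J = r(\hat I)$, so $r$ is injective. Read in the other direction, the set $\hat I^{\max}(J)$ of all pairs $(K \le A)$ satisfying the displayed condition — equivalently, all $(K \le A)$ with $(C \cap K \le C) \in J$ for every finite cyclic subgroup $C \le A$ — is a global transfer system: reflexivity and transitivity are immediate, and closure under pullback follows from this cyclic-subgroup description together with the facts that the image of a cyclic group is cyclic and that $J$ is closed under pullback. Since $r(\hat I^{\max}(J)) = J$, the map $r$ is surjective, and as $r$ and $J \mapsto \hat I^{\max}(J)$ are order-preserving and mutually inverse, $r$ is an isomorphism of posets. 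Combining this with the computation of $I_\Cyc$ yields $I_\AbLie \cong I_\Cyc \cong (\prod_{p \in P} \NN \cup \{\infty\})^\opp$.
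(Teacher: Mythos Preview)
Your proof is correct and shares the paper's overall architecture: compute $I_{\Cycp}$ explicitly, then show that restriction along the inclusions of families is an isomorphism of posets. Your organization is slightly different---you pass directly from $\AbLie$ to $\Cyc$ rather than through the chain $\AbLie \to \Ab \to \Abp \to \Cycp$ as the paper does in Propositions~\ref{propabp}, \ref{propab}, and~\ref{propablie}---but the ingredients (torus splitting for $\AbLie$, primary decomposition, the product argument of Lemma~\ref{lemmsection}) are the same, and your computation of $I_{\Cycp}$ is essentially Lemma~\ref{lemmcyc} and Proposition~\ref{propcycp}. The genuine divergence is in the hardest step, proving that if $K \leqslant H$ are finite abelian $p$-groups with $H[p^n] \subseteq K$ then $K \leqslant_T H$ for any transfer system $T$ restricting to $T^n$. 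The paper (Proposition~\ref{propabp}) passes to the quotient $f\colon H \to H/(p^n K)$ and uses Proposition~\ref{propsuborderless} to see that the $p^n$-torsion subgroup $S$ of the quotient satisfies $S \leqslant_T H/(p^n K)$, then checks directly that $f^{-1}(S)=K$. You instead invoke the intersection-closure property to reduce to the case $H/K \cong \ZZ/p^b$, and then build an explicit lift $\psi\colon H \to \ZZ/p^{n+b}$ of the quotient map, exhibiting $(K \leqslant H)$ as the pullback of $(\ZZ/p^n \leqslant \ZZ/p^{n+b})$ along $\psi$. Both arguments are short; the paper's handles arbitrary $K$ in one stroke via a single well-chosen quotient, while yours trades an extra reduction step for the advantage of pulling back directly from a pair in $\Cycp$, keeping the argument anchored to the cyclic data throughout.
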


We prove this theorem in four steps. First in Section~\ref{sectionCycp} we compute $I_\Cycp$ the partially ordered set of transfer systems of finite cyclic $p$-groups for a prime $p$. Then in Section~\ref{sectionAbp} we prove that this is isomorphic to $I_\Abp$ the partially ordered set of transfer systems of finite abelian $p$-groups. Thirdly we check in Section~\ref{sectionAb} that the partially ordered set $I_\Ab$ of transfer systems of finite abelian groups is isomorphic to $\prod_{p \in P} I_\Abp$. Finally, with a simple argument we show in Section~\ref{sectionAbLie} that $I_\AbLie \cong I_\Ab$.

We also give an explicit description of the pairs contained in each global transfer system of abelian compact Lie groups. We accomplish this by interpreting an element of $(\prod_{p \in P} \NN \cup \{\infty\})^\opp$ as a "generalized prime factorization", and relating this to the orders of the elements of the involved groups. Our explicit description of global transfer systems could be helpful in constructing globally equivariant objects that exhibit intermediate levels of commutativity.

As we showed in \cite[Theorem~6.7]{globalNin}, the partially ordered set $I_\Lie$ of global transfer systems for the family of all compact Lie groups parametrizes the different levels of globally equivariant commutativity. Our result is a substantial step towards obtaining an explicit description of the partially ordered set $I_\Lie$. However, as we show in Section~\ref{sectionnonabelian}, $I_\Lie$ is strictly larger than $I_\AbLie$. More concretely, there are different global transfer systems of compact Lie groups which coincide when restricted to abelian compact Lie groups.

\subsection*{Conventions}

In this paper we only ever consider closed subgroups of compact Lie groups and continuous homomorphisms between them, which are therefore smooth.

\subsection*{Acknowledgements}

The work presented in this paper is part of my PhD project. I would like to thank my PhD supervisor Magdalena Kędziorek for her advice on both the mathematical content and presentation of this paper. I would also like to thank Maarten Solleveld and Tommy Lundemo for several helpful conversations, and the anonymous referee for their helpful suggestions.

\section{Global transfer systems for a family \texorpdfstring{$\fami$}{F}}

We begin by defining global transfer systems for a family of compact Lie groups, generalizing Definition~4.2 of \cite{globalNin} to a smaller family than that of all compact Lie groups.

\begin{defi}
A \emph{family of compact Lie groups} $\fami$ (from here on just a \emph{family}) is a non-empty class of compact Lie groups closed under isomorphisms and closed subgroups.
\end{defi}

\begin{rem}
A \emph{global family} in the sense of \cite[Definition~1.4.1]{global} is additionally required to be closed under quotient subgroups. One can do global homotopy theory with respect to any global family of compact Lie groups (see \cite[Subsections~1.4~and~4.3]{global}), but for the contents of this paper we can consider more general families.
\end{rem}

Some relevant families that we will consider are $\Lie$ the family of all compact Lie groups, $\AbLie$ the family of abelian compact Lie groups, $\Ab$ the family of finite abelian groups, $\Cyc$ the family of finite cyclic groups, $\Abp$ the family of finite abelian $p$-groups for a prime $p$, and $\Cycp$ the family of finite cyclic $p$-groups. A finite group is a $0$-dimensional compact Lie group when given the discrete topology.

Given a compact Lie group $G$, we will use $\Gfam$ to denote the family of groups isomorphic to a closed subgroup of $G$.

\begin{defi}
\label{defitransfer}
Let $\fami$ be a family of compact Lie groups. A \emph{global $\fami$-transfer system} $T$ (or more concisely an \emph{$\fami$-transfer system}) is a binary relation $\leqslant_T$ on $\fami$ such that:
\begin{enumerate}[\roman*)]
    \item The relation $\leqslant_T$ is a partial order that refines the finite index closed subgroup relation on $\fami$. This means that it is transitive, reflexive and antisymmetric, and that if $ K \leqslant_T H$ then $K$ is a finite index closed subgroup of $H$.
    \item \label{defitransferii} The relation $\leqslant_T$ is closed under pullback along arbitrary continuous homomorphisms. Explicitly, for any $G,H, K \in \fami$ and any continuous homomorphism $\theta \colon G \to H$, if $K \leqslant_T H$ then $\theta^{-1}(K) \leqslant_T G$. Note that $\lvert G/\theta^{-1}(K) \rvert \leqslant \lvert H/K \rvert$, thus $\theta^{-1}(K)$ has finite index in $G$.
\end{enumerate}
\end{defi}

When referring to global transfer systems for a specific family, like $\Ab$ the family of finite abelian groups, we will often write "global transfer systems of finite abelian groups" instead of "$\Ab$-transfer systems" for clarity.

\begin{lemm}
The $\fami$-transfer systems form a poset with respect to inclusion, which we denote by $I_\fami$. The poset $I_\fami$ is in fact a complete lattice.
\end{lemm}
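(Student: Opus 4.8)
The plan is to treat an $\fami$-transfer system as exactly what it is --- a sub-relation of the finite-index closed-subgroup relation on $\fami$ --- and to deduce completeness from the observation that the two conditions of Definition~\ref{defitransfer} are each preserved under arbitrary intersections, together with the standard fact that a poset possessing all small infima is automatically a complete lattice.

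First, the poset structure is immediate, since inclusion of relations is reflexive, transitive and antisymmetric. Here one should note in passing that $I_\fami$ is a genuine set rather than a proper class: applying condition~\ref{defitransferii} to an isomorphism $\phi \colon H' \to H$ shows that every transfer system is closed under isomorphism, hence is determined by its restriction to a skeleton of $\fami$, and compact Lie groups form a set up to isomorphism, so the transfer systems do too.

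Next I would show that $I_\fami$ is closed under arbitrary intersections, which produces all meets in $I_\fami$. Given a set $\{T_i\}_{i \in \mathcal{I}}$ of $\fami$-transfer systems, set $T = \bigcap_{i \in \mathcal{I}} T_i$. The relation $\leqslant_T$ is reflexive and transitive because each $\leqslant_{T_i}$ is; it is antisymmetric and refines the finite-index closed-subgroup relation simply because $T \subseteq T_{i_0}$ for any fixed index $i_0$; and it is closed under pullback because if $K \leqslant_{T_i} H$ for all $i$ and $\theta \colon G \to H$ is a continuous homomorphism, then $\theta^{-1}(K) \leqslant_{T_i} G$ for all $i$, hence $\theta^{-1}(K) \leqslant_T G$. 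Thus $\bigcap_{i} T_i$ is the infimum of $\{T_i\}_i$ in $I_\fami$.

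Finally I would exhibit the top element and conclude formally. The full relation $\All$, where $K \leqslant_\All H$ iff $K$ is a finite-index closed subgroup of $H$, is itself an $\fami$-transfer system: it is a partial order, and it is closed under pullback because for $\theta \colon G \to H$ the subgroup $\theta^{-1}(K)$ is closed and satisfies $\lvert G / \theta^{-1}(K) \rvert \leqslant \lvert H/K \rvert < \infty$, precisely as recorded in condition~\ref{defitransferii}. Since $\All \supseteq T$ for every $T \in I_\fami$, the set of upper bounds of any subset $S \subseteq I_\fami$ is non-empty, so $\bigwedge \{ T \in I_\fami : T \supseteq T' \text{ for all } T' \in S \}$ is a least upper bound for $S$; hence every subset has both an infimum and a supremum, and $I_\fami$ is a complete lattice. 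There is no real obstacle here --- the argument is formal --- the only points deserving care being the smallness of $I_\fami$ and the verification that $\All$ itself satisfies the transfer-system axioms.
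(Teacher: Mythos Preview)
Your proposal is correct and follows essentially the same route as the paper: smallness of $I_\fami$ via isomorphism-invariance from condition~\ref{defitransferii}, meets via arbitrary intersections, and completeness from the existence of the top element $\All$. The only cosmetic difference is that the paper additionally remarks that joins can be described concretely as transitive closures of unions, whereas you obtain them formally as meets of upper bounds.
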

\begin{proof}
Explicitly, we have that $T \subset T'$ for $\fami$-transfer systems $T$ and $T'$ if $K \leqslant_T H$ implies that $K \leqslant_{T'} H$. Condition ii) implies that continuous isomorphisms of pairs of compact Lie groups preserve the relation $\leqslant_T$, and there are only countably many isomorphism classes of compact Lie groups, thus $I_\fami$ is indeed a set. The inclusion relation gives a partial order on the set $I_\fami$.

Arbitrary intersections of $\fami$-transfer systems are again $\fami$-transfer systems, and these give the meets of the poset $I_\fami$. There is a greatest $\fami$-transfer system $\All$, for which $K \leqslant_\All H$ for any finite index closed subgroup $K \leqslant H \in \fami$. Therefore $I_\fami$ also has arbitrary joins, and is thus a complete lattice. Joins in $I_\fami$ are the transitive closures of unions of $\fami$-transfer systems.
\end{proof}

A poset can be viewed as a small category with at most one morphism between any two objects. Under this identification a functor between posets is the same thing as an order preserving function, and an equivalence of categories is an isomorphism of posets. We will use these identifications implicitly throughout this article.

The definition of an $\fami$-transfer system immediately leads to the following properties.

\begin{lemm}
\label{lemmsection}
Let $\fami$ be a family, and let $T$ be an $\fami$-transfer system.
\begin{enumerate}[label = \roman*)]
    \item Let $\pi \colon G \to H$ be a continuous homomorphism between groups in $\fami$, and let $s \colon H \to G$ be a section of $\pi$, that is, a homomorphism such that $\pi \circ s = \id_H$. Then $K \leqslant_T H$ if and only if $\pi^{-1}(K) \leqslant_T G$.
    \item Let $H, H'$ be groups in $\fami$, and assume that $K \leqslant_T H$ and $K' \leqslant_T H'$. Then $K\times K' \leqslant_T H \times H'$.
    \item Let $\prod_{i \geqslant 0}^N H_i$ be a finite product of groups in $\fami$, and let $K_i \leqslant H_i$ for each $0 \leqslant i \leqslant N$. If $K_i \leqslant_T H_i$ for each $0 \leqslant i \leqslant N$, then $(\prod_{i \geqslant 0}^N K_i) \leqslant_T (\prod_{i \geqslant 0}^N H_i)$.
\end{enumerate}
\end{lemm}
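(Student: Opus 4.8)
The plan is to establish the three parts in order, with each one a short formal consequence of the two defining conditions of an $\fami$-transfer system (Definition~\ref{defitransfer}) together with the earlier parts.

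For part i), the forward implication $K \leqslant_T H \Rightarrow \pi^{-1}(K)\leqslant_T G$ is just the pullback condition ii) of Definition~\ref{defitransfer} applied to $\pi$. For the converse, the key point is that the section $s$ is itself a continuous homomorphism between groups of $\fami$, so the pullback condition applies to $s$ as well: if $\pi^{-1}(K)\leqslant_T G$, then $s^{-1}(\pi^{-1}(K))\leqslant_T H$, and since $\pi\circ s=\id_H$ one computes $s^{-1}(\pi^{-1}(K))=(\pi s)^{-1}(K)=K$, giving $K\leqslant_T H$.

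For part ii), I first observe that the conclusion only makes sense if $H\times H'\in\fami$, so I assume this (it holds in every family to which we will apply the lemma); closure of $\fami$ under closed subgroups then guarantees that $K\times H'$, $H\times K'$ and $K\times K'$ all lie in $\fami$ as well. Now pull back $K\leqslant_T H$ along the projection $H\times H'\to H$ to get $K\times H'\leqslant_T H\times H'$; pulling this relation back once more along the inclusion $H\times K'\hookrightarrow H\times H'$ yields $(K\times H')\cap(H\times K')=K\times K'\leqslant_T H\times K'$. Separately, pulling back $K'\leqslant_T H'$ along the projection $H\times H'\to H'$ gives $H\times K'\leqslant_T H\times H'$. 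Transitivity of $\leqslant_T$ now gives $K\times K'\leqslant_T H\times H'$. (Alternatively, since each projection admits a section, one can quote part i) directly.)

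Part iii) then follows from part ii) by induction on $N$: the case $N=0$ is the hypothesis $K_0\leqslant_T H_0$, and in the inductive step one applies part ii) with $H=\prod_{i=0}^{N-1}H_i$, $K=\prod_{i=0}^{N-1}K_i$, $H'=H_N$ and $K'=K_N$. None of this is hard; the only genuine idea is the use of the section in part i), and the only thing requiring a little care is keeping track of which product groups lie in $\fami$ — which is precisely why the statement is restricted to the finite products that occur.
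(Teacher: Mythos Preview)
Your proof is correct and follows essentially the same approach as the paper: part~i) is identical, part~iii) is the same induction, and part~ii) differs only cosmetically --- the paper reaches $K\times K'\leqslant_T K\times H'$ in one step by pulling back $K'\leqslant_T H'$ along the projection $K\times H'\to H'$, whereas you take an extra detour through $H\times K'$, but the idea (two projection pullbacks plus transitivity) is the same. Your explicit remark that one must assume $H\times H'\in\fami$ is a point the paper leaves implicit.
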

\begin{proof}
\begin{enumerate}[label = \roman*)]
    \item If $K \leqslant_T H$ then $\pi^{-1}(K) \leqslant_T G$ by Condition~\ref{defitransferii} of the Definition~\ref{defitransfer} of $\fami$-transfer system. Similarly if $\pi^{-1}(K) \leqslant_T G$ then $s^{-1}(\pi^{-1}(K)) = K \leqslant_T H$.
    \item Since $K \leqslant_T H$, by pulling back along the projection $\pi_H \colon H \times H' \to H$ we get that $(K \times H') \leqslant_T (H \times H')$. Similarly since $K' \leqslant_T H'$ then $(K \times K') \leqslant_T (K \times H')$, and transitivity of $T$ proves that $(K \times K') \leqslant_T (H \times H')$.
    \item By induction and part ii) we obtain the statement. \qedhere
\end{enumerate}
\end{proof}

Blumberg and Hill introduced $G$-equivariant $\Nin$-operads in \cite{BLUMBERG2015658}. These operads classify intermediate levels of commutativity in equivariant homotopy theory. They originally conjectured in \cite{BLUMBERG2015658} that these $\Nin$-operads are equivalent to \emph{$G$-indexing systems}. The last step of this equivalence was later proven separately by  Gutiérrez and White in \cite{GutierrezWhite}, by Rubin in \cite{rubin1}, and by Bonventre and Pereira in \cite{BonventrePereira}. It was pointed out in \cite[Section~3]{Rubin2} and \cite[Lemma 6]{balchin2021ninftyoperads} that these $G$-indexing systems are equivalent to \emph{$G$-transfer systems}, which have the following simpler definition. 

\begin{defi}
\label{defiGtransfer}
Let $G$ be a compact Lie group. A $G$-transfer system $T$ is a binary relation on $\Subg(G)$ the set of closed subgroups of $G$, such that:
\begin{enumerate}[\roman*)]
    \item The relation $\leqslant_T$ is a partial order that refines the finite index closed subgroup relation on $\Subg(G)$. This means that it is transitive, reflexive and antisymmetric, and that if $ K \leqslant_T H$ then $K$ is a finite index closed subgroup of $H$.
    \item The relation $\leqslant_T$ is closed under restriction to closed subgroups. Explicitly, for any $H, K, L \in \Subg(G)$ with $L \leqslant H$, if $K \leqslant_T H$ then $(K \cap L) \leqslant_T L$. 
    \item The relation $\leqslant_T$ is closed under conjugation. Explicitly, for any $H, K \in \Subg(G)$ and $g \in G$, if $K \leqslant_T H$ then $(g K g^{-1}) \leqslant_T (g H g^{-1})$.
\end{enumerate}
\end{defi}

Our definition of global $\fami$-transfer systems in Definition~\ref{defitransfer} is the globally equivariant analog of this definition of $G$-transfer systems, which is the generalization of \cite[Definition~3.4]{Rubin2} to compact Lie groups.

\begin{rem}
\label{remdefGtransfer}
We want to make it clear that $G$-transfer systems are not the same as $\Gfam$-transfer systems, the global $\Gfam$-transfer systems associated to the family $\Gfam$ of groups isomorphic to a closed subgroup of $G$. Definition~\ref{defiGtransfer} only requires closure under restriction to a subgroup and conjugation by elements of $G$, while Condition~\ref{defitransferii} of Definition~\ref{defitransfer} requires closure under arbitrary continuous homomorphisms between subgroups of $G$. This means that any $\Gfam$-transfer system is a $G$-transfer system, but the opposite is not true. See Remark~\ref{remUgRg} for more details about this difference between $\Gfam$-transfer systems and $G$-transfer systems.

As an example, let $p$ be a prime, and consider the cyclic group $C_{p^2}$. Let $T$ be the partial order on $\Subg(C_{p^2})$ such that $e \leqslant_T C_p$ is the only non-reflexive relation. This is a $C_{p^2}$-transfer system, as mentioned in \cite[Example~14]{balchin2021ninftyoperads}, because it satisfies Definition~\ref{defiGtransfer}. However it is not closed under the quotient homomorphism $C_{p^2} \to C_p$, and therefore it does not satisfy Condition~\ref{defitransferii} of Definition~\ref{defitransfer}.
\end{rem}

\section{Global transfer systems for \texorpdfstring{$\Cycp$}{Cycp}}
\label{sectionCycp}

Throughout this section we fix a prime $p$. We begin by computing the poset of global transfer systems of finite cyclic $p$-groups, denoted by $I_\Cycp$. The finite cyclic $p$-groups are all isomorphic to $C_{p^n}$ for some $n \geqslant 0$, and $C_{p^n}$ has exactly one subgroup isomorphic to $C_{p^m}$ for each $0 \leqslant m \leqslant n$. Balchin, Barnes and Roitzheim in \cite{balchin2021ninftyoperads} computed the poset of $C_{p^n}$-transfer systems. Although we do not explicitly use their results here, the content of this section is based on their work.

As mentioned already in Remark~\ref{remdefGtransfer}, there are more $C_{p^n}$-transfer systems than global transfer systems for the family of subgroups of $C_{p^n}$, so computing the poset $I_\Cycp$ of $\Cycp$-transfer systems is much simpler than computing the possible $C_{p^n}$-transfer systems for a given $C_{p^n}$.

\begin{lemm}
\label{lemmcyc}
Let $T$ be a $\Cycp$-transfer system, and consider $0 \leqslant m_0 < n_0$ such that $C_{p^{m_0}} \leqslant_T C_{p^{n_0}}$. Then $C_{p^{m}} \leqslant_T C_{p^{n}}$ for any $m \geqslant m_0$ and any $n \geqslant m$.
\end{lemm}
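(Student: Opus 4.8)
The plan is to reduce the statement, using transitivity and reflexivity of $\leqslant_T$, to the single family of relations $C_{p^m} \leqslant_T C_{p^{m+1}}$ for all $m \geqslant m_0$. Indeed, once these are known, for any $n \geqslant m \geqslant m_0$ the chain $C_{p^m} \leqslant_T C_{p^{m+1}} \leqslant_T \cdots \leqslant_T C_{p^n}$ yields $C_{p^m} \leqslant_T C_{p^n}$ (and the degenerate case $n = m$ is just reflexivity); note that every link $C_{p^k} \leqslant_T C_{p^{k+1}}$ used in such a chain has $k \geqslant m \geqslant m_0$, so the reduction is legitimate.

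To prove $C_{p^m} \leqslant_T C_{p^{m+1}}$ for a fixed $m \geqslant m_0$, I would exhibit a continuous homomorphism $\theta \colon C_{p^{m+1}} \to C_{p^{n_0}}$ with $\theta^{-1}(C_{p^{m_0}}) = C_{p^m}$ and then apply closure under pullback (Condition~\ref{defitransferii} of Definition~\ref{defitransfer}) to the hypothesis $C_{p^{m_0}} \leqslant_T C_{p^{n_0}}$. Identifying $C_{p^k}$ with $\ZZ/p^k\ZZ$ and writing $d = n_0 - m_0 \geqslant 1$, take $\theta$ to be multiplication by $p^{d-1}$. This is well defined exactly because $p^{m+1}\cdot p^{d-1} = p^{m+d} \equiv 0 \pmod{p^{n_0}}$, which holds precisely as $m + d \geqslant m_0 + d = n_0$; this is the only point at which the hypothesis $m \geqslant m_0$ is used, which also explains why the conclusion cannot reach below $m_0$. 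Since $C_{p^{m_0}} \leqslant C_{p^{n_0}}$ is the set of classes divisible by $p^d$, and $p^{d-1}x$ is divisible by $p^d$ exactly when $p \mid x$, the preimage $\theta^{-1}(C_{p^{m_0}})$ is the subgroup of $\ZZ/p^{m+1}\ZZ$ consisting of classes divisible by $p$, i.e. the unique subgroup of order $p^m$, namely $C_{p^m}$. A variant avoiding a single ``twisted'' map: first pull $C_{p^{m_0}} \leqslant_T C_{p^{n_0}}$ back along the quotient $C_{p^n} \twoheadrightarrow C_{p^{n_0}}$ for $n \geqslant n_0$, under which $C_{p^{m_0}}$ pulls back to $C_{p^{m_0 + n - n_0}}$, to get $C_{p^j} \leqslant_T C_{p^{j+d}}$ for all $j \geqslant m_0$; then pull $C_{p^m} \leqslant_T C_{p^{m+d}}$ back along the inclusion $C_{p^{m+1}} \hookrightarrow C_{p^{m+d}}$, under which the order-$p^m$ subgroup pulls back to the order-$p^m$ subgroup, giving $C_{p^m} \leqslant_T C_{p^{m+1}}$ again.

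I do not expect a genuine obstacle here. The one idea that matters is to target the unit-step relations $C_{p^m} \leqslant_T C_{p^{m+1}}$ and let transitivity do the rest, rather than attempting to realize each pair $(m,n)$ by a single pullback — which is in fact impossible once $n - m$ exceeds $n_0 - m_0$ — together with the observation that the required preimage is produced by a non-surjective homomorphism (multiplication by $p^{d-1}$, equivalently an inclusion following a quotient). Everything else is the elementary bookkeeping of preimages of subgroups of cyclic $p$-groups, which I would verify directly on $\ZZ/p^k\ZZ$.
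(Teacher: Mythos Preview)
Your proposal is correct and follows essentially the same strategy as the paper: reduce to the unit-step relations $C_{p^m}\leqslant_T C_{p^{m+1}}$ for $m\geqslant m_0$ and then invoke transitivity. The paper obtains these unit steps by first pulling back along the inclusion $C_{p^{m_0+1}}\hookrightarrow C_{p^{n_0}}$ to get $C_{p^{m_0}}\leqslant_T C_{p^{m_0+1}}$, and then along the quotients $C_{p^{m_0+i+1}}\twoheadrightarrow C_{p^{m_0+1}}$ to shift this step upward, which is exactly your ``variant'' argument with the roles of inclusion and quotient swapped; your primary argument simply packages the two pullbacks into the single multiplication-by-$p^{d-1}$ map.
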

\begin{proof}
Let $\theta$ be the subgroup inclusion $\theta \colon C_{p^{m_0 + 1}} \to C_{p^{n_0}}$. Pullback along $\theta$ implies that $C_{p^{m_0}} \leqslant_T C_{p^{m_0 + 1}}$. For each $i \geqslant 0$ pullback along the projection $C_{p^{m_0 + i + 1}} \to C_{p^{m_0 + 1}}$ with kernel $C_{p^i}$ shows that $C_{p^{m_0 + i}} \leqslant_T C_{p^{m_0 + i + 1}}$. Finally by transitivity of $T$, $C_{p^{m}} \leqslant_T C_{p^{n}}$ for any $m \geqslant m_0$ and any $n \geqslant m$.
\end{proof}

\begin{constr}
\label{constrIn}
For each $n \in \NN$, let $T^n$ be the $\Cycp$-transfer system such that $C_{p^{i}} \leqslant_{T^n} C_{p^{j}}$ precisely if $i = j$ or $n \leqslant i \leqslant j$. In particular, $T^0= \All$ the greatest $\Cycp$-transfer system. It is straightforward to check that for each $n \in \NN$ this definition gives a partial order that refines the subgroup relation. For any homomorphism $\theta \colon C_{p^{m}} \to C_{p^{j}}$, if $n \leqslant i \leqslant j$ then $\theta(C_{p^{n}}) \leqslant C_{p^{i}}$ as subgroups of $C_{p^{j}}$. This means that $C_{p^{n}} \leqslant \theta^{-1}(C_{p^{i}})$, so that $\theta^{-1}(C_{p^{i}}) \leqslant_{T^n} C_{p^{m}}$ by construction, and so $T^n$ satisfies Condition ii) of Definition~\ref{defitransfer}. Note that if $n\leqslant n'$ then $T^{n'} \subset T^n$. 

Let $T^\infty$ denote the smallest $\Cycp$-transfer system, such that if $K \leqslant_{T^\infty} H$ then $K=H$. Note that $T^\infty = \cap_{n \in \NN} T^n$, so $T^\infty$ is the meet of the set $\{T^n\}_{n \in \NN}$ in $I_\Cycp$.
\end{constr}

\begin{prop}
\label{propcycp}
The poset $I_\Cycp$ of $\Cycp$-transfer systems is isomorphic to $(\NN \cup \{\infty\})^\opp$ via the functor $(\NN \cup \{\infty\})^\opp \to I_\Cycp$ that sends $n$ to $T^n$.
\end{prop}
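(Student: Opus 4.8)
The plan is to verify that the order-preserving map $\Phi\colon (\NN \cup \{\infty\})^\opp \to I_\Cycp$ sending $n$ to $T^n$ is a bijection which additionally reflects the order, and hence is an isomorphism of posets. That $\Phi$ is order-preserving is already recorded in Construction~\ref{constrIn}: if $n \leqslant n'$ then $T^{n'} \subset T^n$. So two things remain, surjectivity and order-reflection.

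Surjectivity is the heart of the matter. Given an arbitrary $\Cycp$-transfer system $T$, I would consider the set $S = \{ m \in \NN : C_{p^m} \leqslant_T C_{p^{m'}} \text{ for some } m' > m \}$ of exponents occurring as the source of a non-reflexive relation of $T$. Every non-reflexive relation $C_{p^i} \leqslant_T C_{p^j}$ of $T$ has $i < j$, since $C_{p^i}$ must be a finite index closed subgroup of $C_{p^j}$ and $i \neq j$; hence $S = \emptyset$ precisely when $T$ has only reflexive relations, that is, when $T = T^\infty = \Phi(\infty)$. If $S \neq \emptyset$, put $n \defeq \min S \in \NN$. Choosing $m' > n$ with $C_{p^n} \leqslant_T C_{p^{m'}}$ and feeding this into Lemma~\ref{lemmcyc}, we get $C_{p^m} \leqslant_T C_{p^j}$ for every $m \geqslant n$ and every $j \geqslant m$; by the description of $T^n$ in Construction~\ref{constrIn} this says exactly that $T^n \subset T$. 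For the reverse inclusion, any relation $C_{p^i} \leqslant_T C_{p^j}$ satisfies either $i = j$ or (as above) $i < j$ and so $i \in S$, whence $i \geqslant n$; in both cases $C_{p^i} \leqslant_{T^n} C_{p^j}$. Therefore $T \subset T^n$, and combining the two inclusions gives $T = T^n = \Phi(n)$.

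For order-reflection (which also yields injectivity of $\Phi$), suppose $T^n \subset T^{n'}$; I claim $n' \leqslant n$ in $\NN \cup \{\infty\}$. This is automatic when $n = \infty$, so assume $n \in \NN$. Then $C_{p^n} \leqslant_{T^n} C_{p^{n+1}}$ is a non-reflexive relation of $T^n$, hence of $T^{n'}$; by the description of $T^{n'}$ in Construction~\ref{constrIn} a non-reflexive relation $C_{p^i} \leqslant_{T^{n'}} C_{p^j}$ forces $n' \leqslant i$, so $n' \leqslant n$, as desired.

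The argument is short, and the one point to appreciate is that Lemma~\ref{lemmcyc} already does the real work: it propagates a single non-reflexive relation to all of $\Cycp$, so that a $\Cycp$-transfer system is completely determined by the single numerical invariant $\min S$. Once that is in place, the rest is bookkeeping, and I do not expect any genuine obstacle.
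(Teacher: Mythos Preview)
Your proof is correct and follows essentially the same approach as the paper: both identify, for a given $T \neq T^\infty$, the minimal exponent $n$ appearing as the source of a non-reflexive relation and use Lemma~\ref{lemmcyc} to conclude $T = T^n$. Your treatment is slightly more explicit in verifying order-reflection (distinctness of the $T^n$), which the paper absorbs into the phrase ``by construction, the poset $\{T^n\}$ ordered by inclusion is isomorphic to $(\NN \cup \{\infty\})^\opp$''.
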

\begin{proof}
By construction, the poset $\{T^n \mid n \in \NN \cup \{\infty\} \}$ ordered by inclusion is isomorphic to $(\NN \cup \{\infty\})^\opp$. Therefore we only need to prove that any $\Cycp$-transfer system equals one of these $T^n$.

Let $T$ be a $\Cycp$-transfer system. If $T \neq T^\infty$, let $n \in \NN$ be minimal such that $C_{p^{n}} \leqslant_T C_{p^{m}}$ for some $m > n$. This means that $C_{p^{i}} \nleqslant_T C_{p^{j}}$ for each $i < n$ and each $j>i$. On the other hand $C_{p^{i}} \leqslant_T C_{p^{j}}$ for each $j \geqslant i \geqslant n$ by Lemma~\ref{lemmcyc}. Therefore $T = T^n$.
\end{proof}

\section{Change of family functors}
\label{sectionchange}

We now construct and study change of family functors for global transfer systems for different families of compact Lie groups. We will prove in particular that if the family $\eami$ is contained in the family $\fami$ then the poset $I_\eami$ of $\eami$-transfer systems embeds into the poset $I_\fami$ of $\fami$-transfer systems.

\begin{defi}
\label{defiR}
Let $\eami, \fami$ be families of compact Lie groups. We define a functor $R_\eami^\fami \colon I_\eami \to I_\fami$. It sends an $\eami$-transfer system $T$ to the $\fami$-transfer system $S= R_\eami^\fami(T)$ that satisfies that $K \leqslant_S H$ if and only if:
\begin{enumerate}[\roman*)]
    \item $K, H \in \fami$ and $K$ is a finite index closed subgroup of $H$.
    \item For each $\theta \colon  G \to H$ with $G \in \eami$, $\theta^{-1}(K) \leqslant_T G$.
\end{enumerate}
It is straightforward to check that this indeed gives an $\fami$-transfer system, and that $R_\eami^\fami$ preserves inclusion of global transfer systems and thus is a functor between the posets $I_\eami$ and $I_\fami$. Note that by construction the functor $R_\fami^\fami$ is always the identity on $I_\fami$.
\end{defi}

\begin{lemm}
\label{lemmU}
If the family $\eami$ is contained in the family $\fami$ then the functor $R_\fami^\eami$ is precisely the forgetful functor from $I_\fami$ to $I_\eami$, which we will denote with $U_\eami^\fami \colon I_\fami \to I_\eami$, and which sends an $\fami$-transfer system $T$ to the restriction of the relation $T$ from $\fami$ to $\eami$.
\end{lemm}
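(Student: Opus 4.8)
The statement is that, under the hypothesis $\eami \subseteq \fami$, the $\eami$-transfer system $R_\fami^\eami(T)$ attached to an $\fami$-transfer system $T$ by Definition~\ref{defiR} has the same underlying relation as $T$ restricted from $\fami$ to $\eami$. I would first unwind the definition of $R_\fami^\eami$: substituting the source family $\fami$ and target family $\eami$ into Definition~\ref{defiR}, we have for $S = R_\fami^\eami(T)$ that $K \leqslant_S H$ precisely when (i) $K, H \in \eami$ with $K$ a finite index closed subgroup of $H$, and (ii) for every continuous homomorphism $\theta \colon G \to H$ with $G \in \fami$, one has $\theta^{-1}(K) \leqslant_T G$. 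It then suffices to prove that, for $K, H \in \eami$, the relation $K \leqslant_S H$ holds if and only if $K \leqslant_T H$.

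For the "only if" direction, I would simply test condition (ii) against the homomorphism $\theta = \id_H \colon H \to H$. This is legitimate because $H \in \eami \subseteq \fami$, so $\id_H$ is one of the homomorphisms quantified over in (ii); it yields $\id_H^{-1}(K) = K \leqslant_T H$ directly. For the "if" direction, suppose $K \leqslant_T H$ with $K, H \in \eami$. Since $T$ is an $\fami$-transfer system, property i) of Definition~\ref{defitransfer} gives that $K$ is a finite index closed subgroup of $H$, so condition (i) is met; and for any $\theta \colon G \to H$ with $G \in \fami$, Condition~\ref{defitransferii} of Definition~\ref{defitransfer} (closure under pullback along arbitrary continuous homomorphisms) gives $\theta^{-1}(K) \leqslant_T G$, so condition (ii) holds as well. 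Hence $K \leqslant_S H$. This shows $R_\fami^\eami(T)$ coincides with the restriction of $T$ to $\eami$, which along the way confirms that this restriction is indeed an $\eami$-transfer system and that $R_\fami^\eami$ is exactly the forgetful functor $U_\eami^\fami$.

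There is essentially no serious obstacle here; the only thing to be careful about is the bookkeeping in the direction of the quantifier in condition (ii) of Definition~\ref{defiR}, and the observation that the inclusion $\eami \subseteq \fami$ is exactly what makes $\id_H$ an admissible test homomorphism, which is what forces the restriction of $T$ to dominate $S$. Everything else is a direct appeal to the two conditions of Definition~\ref{defitransfer}.
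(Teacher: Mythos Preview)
Your proof is correct and follows essentially the same approach as the paper: both directions use exactly the two moves you describe, namely testing condition~(ii) on $\theta=\id_H$ (which is admissible because $H\in\eami\subseteq\fami$) for one implication, and invoking closure of $T$ under pullback along arbitrary homomorphisms for the other. The paper's proof is simply a terser version of yours.
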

\begin{proof}
Let $T$ be an $\fami$-transfer system, and let $K \leqslant H$ be groups in $\eami$. If $K \leqslant_T H$ then $K \leqslant_{R_\fami^\eami(T)} H$ because $T$ is closed under pullback along arbitrary homomorphisms. Conversely if $K \leqslant_{R_\fami^\eami(T)} H$ then taking $\theta=\id_H$ in the construction of $R_\fami^\eami$ shows that $K \leqslant_T H$.
\end{proof}

\begin{lemm}
\label{lemmRcomp}
Let $\dami, \eami, \fami$ be families of compact Lie groups. If any homomorphism from a group in $\dami$ to a group in $\fami$ factors through a group belonging to $\eami$, then the composition $R_\eami^\fami \circ R_\dami^\eami$ equals $R_\dami^\fami$.
\end{lemm}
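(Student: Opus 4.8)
The plan is to prove the claimed equality $R_\eami^\fami \circ R_\dami^\eami = R_\dami^\fami$ as functors $I_\dami \to I_\fami$ by showing that both functors send a $\dami$-transfer system $T$ to the same $\fami$-transfer system, i.e.\ by checking that for every pair of groups $K \leqslant H$ in $\fami$ one has $K \leqslant_{R_\eami^\fami(R_\dami^\eami(T))} H$ if and only if $K \leqslant_{R_\dami^\fami(T)} H$. Fix such a $T$ and such a pair $K \leqslant H$, noting that condition i) in Definition~\ref{defiR} (that $K$ is a finite index closed subgroup of $H$ in $\fami$) is literally the same for both sides, so the content is entirely in comparing condition ii).

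First I would unwind the left-hand side. By Definition~\ref{defiR} applied to $R_\eami^\fami$, we have $K \leqslant_{R_\eami^\fami(R_\dami^\eami(T))} H$ precisely when for every homomorphism $\varphi \colon E \to H$ with $E \in \eami$ we have $\varphi^{-1}(K) \leqslant_{R_\dami^\eami(T)} E$; and applying Definition~\ref{defiR} again to $R_\dami^\eami$, the latter holds precisely when for every homomorphism $\psi \colon D \to E$ with $D \in \dami$ we have $\psi^{-1}(\varphi^{-1}(K)) = (\varphi\psi)^{-1}(K) \leqslant_T D$. So the left-hand condition says: for all $E \in \eami$, all $\varphi \colon E \to H$, and all $\psi \colon D \to E$ with $D \in \dami$, we have $(\varphi\psi)^{-1}(K) \leqslant_T D$. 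The right-hand side, by Definition~\ref{defiR} applied to $R_\dami^\fami$, says: for all $D \in \dami$ and all $\theta \colon D \to H$, we have $\theta^{-1}(K) \leqslant_T D$.

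Now the two implications. The direction (left $\Rightarrow$ right) is formal and uses nothing about the hypothesis: given $\theta \colon D \to H$ with $D \in \dami$, factor it trivially — but more carefully, we want an $E \in \eami$ with maps $\varphi, \psi$ whose composite is $\theta$. If $\dami \subseteq \eami$ this is immediate ($E = D$), but in general we should instead argue the other direction carefully and note that actually the hypothesis is what makes (right $\Rightarrow$ left) work, while (left $\Rightarrow$ right) needs the mild observation that any $\theta$ arising on the right can be realized by a composite on the left — which is exactly guaranteed by the factorization hypothesis as well. So for (right $\Rightarrow$ left): assume $\theta^{-1}(K) \leqslant_T D$ for all $\theta \colon D \to H$, $D \in \dami$. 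Take any $E \in \eami$, $\varphi \colon E \to H$, $\psi \colon D \to E$ with $D \in \dami$; then $\varphi\psi \colon D \to H$ is a homomorphism from a group in $\dami$, so by hypothesis $(\varphi\psi)^{-1}(K) \leqslant_T D$, which is exactly the left-hand condition. For (left $\Rightarrow$ right): given $\theta \colon D \to H$ with $D \in \dami$, the factorization hypothesis provides a group $E \in \eami$ and maps $\psi \colon D \to E$, $\varphi \colon E \to H$ with $\varphi\psi = \theta$; the left-hand condition applied to this data gives $\theta^{-1}(K) = (\varphi\psi)^{-1}(K) \leqslant_T D$, which is the right-hand condition.

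The main (and really only) obstacle is bookkeeping: making sure the factorization hypothesis is invoked in exactly the right place — it is needed for (left $\Rightarrow$ right), to promote an arbitrary $\theta \colon D \to H$ into a composite through $\eami$, while (right $\Rightarrow$ left) is purely formal — and making sure the nested unwinding of the two applications of Definition~\ref{defiR} is done correctly, in particular the identity $\psi^{-1}(\varphi^{-1}(K)) = (\varphi\psi)^{-1}(K)$. There is no deep content; once both conditions are spelled out as quantified statements over homomorphisms they visibly match, modulo the factorization hypothesis. I would close by remarking that this lemma is exactly what is needed to chain the change-of-family functors $R_{\Cyc}^{\Cyc_p} \to \cdots$ used in the later sections, though that is not part of the proof itself.
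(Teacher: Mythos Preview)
Your argument is correct and is essentially identical to the paper's: you unwind both sides via Definition~\ref{defiR}, identify $\psi^{-1}(\varphi^{-1}(K)) = (\varphi\psi)^{-1}(K)$, observe that one implication is formal and the other uses the factorization hypothesis. The middle paragraph wobbles expositionally (you briefly say the hypothesis is what makes ``right $\Rightarrow$ left'' work, which is backwards), but you immediately straighten this out in the explicit arguments and in the closing summary, which correctly locates the hypothesis in the ``left $\Rightarrow$ right'' direction---exactly as in the paper.
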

\begin{proof}
For $T$ a $\dami$-transfer system, by Definition~\ref{defiR} we see that 
\begin{equation}
\label{eqdirect}
    K \leqslant_{R^\fami_\dami(T)} H \; \text{if and only if for each} \; \theta \colon  G \to H \; \text{with} \; G \in \dami , \; \theta^{-1}(K) \leqslant_T G.
\end{equation}
Meanwhile using twice the construction given in Definition~\ref{defiR} we see that
\begin{equation}
\label{eqcomp}
\begin{split}
    K \leqslant_{R^\fami_\eami(R^\eami_\dami(T))} H \; \text{if and only if for each} \; \theta_1 \colon  L \to H \; \text{with} \; L \in \eami \\
    \text{and each} \; \theta_2 \colon  G \to L \; \text{with} \; G \in \dami, \; \text{we have that} \; \theta_2^{-1}(\theta_1^{-1}(K)) \leqslant_T G.
\end{split}
\end{equation}

It is straightforward that if $K$ and $H$ satisfy (\ref{eqdirect}) then they satisfy (\ref{eqcomp}) since $\theta_2^{-1}(\theta_1^{-1}(K))= (\theta_1 \circ \theta_2)^{-1}(K)$. Conversely assume that $K$ and $H$ satisfy (\ref{eqcomp}), and let $\theta \colon  G \to H$ be a homomorphism with $G \in \dami$. By the condition in the statement of the lemma, $\theta$ factors through a group belonging to $\eami$ as $\theta = \theta_1 \circ \theta_2$. Therefore by (\ref{eqcomp}) we know that $\theta^{-1}(K) = \theta_2^{-1}(\theta_1^{-1}(K)) \leqslant_T G$.
\end{proof}

\begin{prop}
\label{propadjoint}
If the family $\eami$ is contained in the family $\fami$ then $U_\eami^\fami \circ R_\eami^\fami$ is the identity on $I_\eami$, and $R_\eami^\fami$ is right adjoint to $U_\eami^\fami$. In particular, $R_\eami^\fami$ is an embedding of $I_\eami$ into $I_\fami$.
\end{prop}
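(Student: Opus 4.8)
The plan is to obtain the first identity as a special case of the composition formula for change-of-family functors, and then to read off the adjunction directly from the defining property of $R_\eami^\fami$, the only ingredient beyond formalities being that families are closed under closed subgroups.

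First I would show $U_\eami^\fami \circ R_\eami^\fami = \id_{I_\eami}$. By Lemma~\ref{lemmU}, $U_\eami^\fami$ is the functor $R_\fami^\eami$, so the composite in question is $R_\fami^\eami \circ R_\eami^\fami$. I apply Lemma~\ref{lemmRcomp} with the triple $(\dami, \eami, \fami)$ of that lemma instantiated as $(\eami, \fami, \eami)$: the hypothesis required is that every homomorphism from a group in $\eami$ to a group in $\eami$ factor through a group in $\fami$, which is immediate since $\eami \subseteq \fami$ (its domain already lies in $\fami$). Hence $R_\fami^\eami \circ R_\eami^\fami = R_\eami^\eami$, and the last sentence of Definition~\ref{defiR} gives $R_\eami^\eami = \id_{I_\eami}$.

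Next I would verify the adjunction $U_\eami^\fami \dashv R_\eami^\fami$. As everything in sight is a poset, it suffices to check that for an $\fami$-transfer system $S$ and an $\eami$-transfer system $T$ one has $U_\eami^\fami(S) \subseteq T$ if and only if $S \subseteq R_\eami^\fami(T)$. For the implication from right to left, suppose $S \subseteq R_\eami^\fami(T)$ and $K \leqslant_S H$ with $K, H \in \eami$; then $K \leqslant_{R_\eami^\fami(T)} H$, and taking $\theta = \id_H$ in condition ii) of Definition~\ref{defiR} yields $K \leqslant_T H$, so $U_\eami^\fami(S) \subseteq T$. For the implication from left to right, suppose $U_\eami^\fami(S) \subseteq T$ and $K \leqslant_S H$ with $K, H \in \fami$; to see $K \leqslant_{R_\eami^\fami(T)} H$ I must check that $\theta^{-1}(K) \leqslant_T G$ for each $\theta \colon G \to H$ with $G \in \eami$. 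Closure of $S$ under pullback gives $\theta^{-1}(K) \leqslant_S G$, and since $G \in \eami$ and $\eami$ is closed under closed subgroups, $\theta^{-1}(K) \in \eami$; hence $\theta^{-1}(K) \leqslant_{U_\eami^\fami(S)} G$, and therefore $\theta^{-1}(K) \leqslant_T G$ by hypothesis. This last step, ensuring the pulled-back subgroup stays inside $\eami$, is the only point that needs any attention, and it is immediate from the definition of a family.

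Finally, the embedding statement is formal: $U_\eami^\fami \circ R_\eami^\fami = \id_{I_\eami}$ makes $R_\eami^\fami$ injective on objects, and if $R_\eami^\fami(T) \subseteq R_\eami^\fami(T')$ then applying the order-preserving $U_\eami^\fami$ gives $T = U_\eami^\fami R_\eami^\fami(T) \subseteq U_\eami^\fami R_\eami^\fami(T') = T'$; combined with the fact that $R_\eami^\fami$ preserves inclusions (Definition~\ref{defiR}), this exhibits $R_\eami^\fami$ as an order embedding, equivalently a fully faithful injective-on-objects functor of posets. I do not anticipate a genuine obstacle: once Lemmas~\ref{lemmU} and \ref{lemmRcomp} are available, the whole proposition is a short unwinding of the definition of $R_\eami^\fami$.
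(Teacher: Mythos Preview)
Your proof is correct and follows essentially the same approach as the paper: the identity $U_\eami^\fami \circ R_\eami^\fami = \id$ via Lemma~\ref{lemmRcomp}, and the adjunction by directly unwinding Definition~\ref{defiR}. Your version is slightly more explicit in places (noting $\theta^{-1}(K)\in\eami$ by closure under subgroups, and spelling out the embedding), but the arguments coincide.
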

\begin{proof}
Any homomorphism between groups in $\eami$ factors through a group belonging to $\fami$ as $\eami \subset \fami$. Thus by Lemma~\ref{lemmRcomp} $U_\eami^\fami \circ R_\eami^\fami = R_\fami^\eami \circ R_\eami^\fami = R_\eami^\eami$ which is the identity on $I_\eami$.

Let $T \in I_\fami$ and $T' \in I_\eami$. If $T \subset R_\eami^\fami(T')$, applying $U_\eami^\fami$ shows that $U_\eami^\fami(T) \subset T'$. Conversely if $U_\eami^\fami(T) \subset T'$, let $K \leqslant_T H$. For each $\theta \colon  G \to H$ with $G \in \eami$, we have that $\theta^{-1}(K) \leqslant_T G$ because $T$ is an $\fami$-transfer system, so by definition $\theta^{-1}(K) \leqslant_{U_\eami^\fami(T)} G$ and thus $\theta^{-1}(K) \leqslant_{T'} G$ because $U_\eami^\fami(T) \subset T'$. This implies that $K \leqslant_{R_\eami^\fami(T')} H$, completing the proof that $U_\eami^\fami(T) \subset T'$ if and only if $T \subset R_\eami^\fami(T')$.
\end{proof}

\begin{rem}
The adjointness of the functors $R_\eami^\fami$ and $U_\eami^\fami$ for families $\eami \subset \fami$ and the completeness of the lattice $I_\fami$ means that for an $\eami$-transfer system $T$, the $\fami$-transfer system $R_\eami^\fami(T)$ is precisely the maximum of the $\fami$-transfer systems $S$ such that $U_\eami^\fami(S) \subset T$.
\end{rem}

\begin{rem}
\label{remUgRg}
In \cite[Proposition~6.6]{globalNin} we constructed a functor $R_G$ from the poset $I_G$ of $G$-transfer systems to the poset $I_\Lie$ of global transfer systems for the family of all compact Lie groups. This functor is right adjoint to the forgetful functor from $I_\Lie$ to $I_G$. The image (denoted with $\IGc$ in \cite[Definition~7.1]{globalNin}) of this forgetful functor consists precisely of the $\Gfam$-transfer systems, and the restriction of the functor $R_G$ to $\IGc = I_\Gfam$ is exactly the functor $R^\Lie_\Gfam$ that we have constructed in this section. What this means is that for any compact Lie group $G$, the $\Gfam$-transfer systems are precisely the $G$-transfer systems that can appear underlying a global $\Lie$-transfer system. 

A $G$-space, when regarded as a functor from the orbit category of $G$ to spaces, includes as part of its structure maps associated to restriction to subgroups and conjugation by elements of $G$, as these generate the morphisms of the orbit category of $G$. A globally equivariant object includes as part of its structure a map associated to any continuous homomorphism $f \colon K \to H$, as we mentioned in the introduction. As explained in Remark~\ref{remdefGtransfer}, the difference between $G$-transfer systems and global $\Gfam$-transfer systems is that the former only have to be closed under restriction to subgroups and conjugation by elements of $G$, while the latter additionally have to be closed under arbitrary continuous homomorphisms. This corresponds directly to the additional structure that globally equivariant objects posses in comparison to $G$-equivariant objects.
\end{rem}

\section{Global transfer systems for \texorpdfstring{$\Abp$}{Abp}}
\label{sectionAbp}

In this section we again fix a prime $p$. Let $\Abp$ denote the family of finite abelian $p$-groups.

\begin{prop}
\label{propRTn}
Let $T^n$ for $n \in \NN \cup \{\infty\}$ be the $\Cycp$-transfer systems defined in Construction~\ref{constrIn}, and for each $n \in \NN \cup \{\infty\}$ let $A^n$ be the $\Abp$-transfer system $R_\Cycp^\Abp(T^n)$. Then $K \leqslant_{A^n} H$ if and only if $K$ contains all elements of $H$ of order less than or equal to $p^n$ (if $n = \infty$ let $p^\infty = \infty$).
\end{prop}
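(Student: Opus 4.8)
The plan is to unwind the definition of $R_\Cycp^\Abp$ together with the explicit description of the $T^n$, and then argue by looking at elements of small order. First I would recall from Definition~\ref{defiR} that $K \leqslant_{A^n} H$ holds precisely when $K$ is a subgroup of $H$ (finite index being automatic for finite groups) and $\theta^{-1}(K) \leqslant_{T^n} G$ for every $G \in \Cycp$ and every homomorphism $\theta \colon G \to H$; and from Construction~\ref{constrIn} that, writing $G = C_{p^j}$, a subgroup $C_{p^i} \leqslant C_{p^j}$ satisfies $C_{p^i} \leqslant_{T^n} C_{p^j}$ exactly when $i = j$ or $i \geqslant n$. Since each $C_{p^j}$ has a unique subgroup of order $p^i$ for every $0 \leqslant i \leqslant j$, the subgroup $\theta^{-1}(K)$ is determined by its order, so the condition $\theta^{-1}(K) \leqslant_{T^n} C_{p^j}$ amounts to: either $\Ima \theta \subseteq K$, or $\lvert \theta^{-1}(K) \rvert \geqslant p^n$.

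For the ``if'' direction I would assume that $K$ contains every element of $H$ of order at most $p^n$, take an arbitrary $\theta \colon C_{p^j} \to H$, write $\theta^{-1}(K) = C_{p^i}$ with $i \leqslant j$, and show $i = j$ or $i \geqslant n$. If instead $i < j$ and $i < n$, then $i + 1 \leqslant j$ and $i + 1 \leqslant n$, so $C_{p^j}$ contains the subgroup $C_{p^{i+1}}$; a generator $g$ of $C_{p^{i+1}}$ has order $p^{i+1} \leqslant p^n$, hence $\theta(g)$ has order dividing $p^{i+1}$, so $\theta(g) \in K$ and thus $g \in \theta^{-1}(K) = C_{p^i}$, contradicting that $g$ has order $p^{i+1}$. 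So $\theta^{-1}(K) \leqslant_{T^n} C_{p^j}$, and as $\theta$ was arbitrary, $K \leqslant_{A^n} H$. For the ``only if'' direction I would assume $K \leqslant_{A^n} H$, take $h \in H$ of order $p^m \leqslant p^n$, and apply the defining condition to the inclusion $\theta\colon \langle h \rangle \hookrightarrow H$ with $\langle h \rangle \cong C_{p^m}$: then $\langle h \rangle \cap K = \theta^{-1}(K) = C_{p^i}$ with $C_{p^i} \leqslant_{T^n} C_{p^m}$, so $i = m$ or $i \geqslant n$; since $i \leqslant m \leqslant n$, both possibilities force $i = m$, i.e.\ $h \in K$. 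The case $n = \infty$ (with the convention $p^\infty = \infty$) falls out of the same arguments, since $T^\infty$ consists only of equalities, ``order $\leqslant p^\infty$'' means all of $H$, and every $h \in H$ has finite order.

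I do not expect a serious obstacle: the content is essentially bookkeeping. The two points that need a little attention are doing the initial unwinding of $R_\Cycp^\Abp$ cleanly, and being careful that in the ``if'' direction one genuinely must allow \emph{all} homomorphisms out of cyclic $p$-groups rather than only subgroup inclusions — the passage to the subgroup $C_{p^{i+1}}$ and its generator is exactly what converts the hypothesis on orders of elements into the required relation — together with the boundary cases $i = j$, $m = n$, and $n = \infty$.
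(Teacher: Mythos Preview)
Your proof is correct and follows essentially the same approach as the paper: both unwind the definition of $R_\Cycp^\Abp$ to the condition on preimages under maps from cyclic $p$-groups, and then witness the equivalence with the order condition by looking at a suitable element of small order (you via a generator of $C_{p^{i+1}}$ or the inclusion of $\langle h\rangle$, the paper via an $x\in C_{p^m}\setminus\theta^{-1}(K)$ and its image). The only cosmetic difference is that the paper phrases the whole argument in the contrapositive, working with $\nleqslant_{A^n}$ and $\nleqslant_{T^n}$ throughout.
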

\begin{proof}
By the construction of the functor $R_\Cycp^\Abp$, for finite abelian $p$-groups $K \leqslant H$ we know that $K \nleqslant_{A^n} H$ if and only if there exists a homomorphism $\theta \colon C_{p^m} \to H$ such that $\theta^{-1}(K) \nleqslant_{T^n} C_{p^m}$. By the definition of the transfer systems $T^n$ in Construction~\ref{constrIn}, $\theta^{-1}(K) \nleqslant_{T^n} C_{p^m}$ if and only if $\theta^{-1}(K)= C_{p^i}$ with $i < n$, and this is the case if and only if there exists some $x \in C_{p^m} \backslash \theta^{-1}(K)$ with order less than or equal to $p^n$. 

If there exists such a homomorphism $\theta \colon C_{p^m} \to H$ and $x \in C_{p^m} \backslash \theta^{-1}(K)$ with $\lvert x\rvert \leqslant p^n$, then $\theta(x) \in H \backslash K$ and has order less than or equal to $p^n$, which shows one implication. On the other hand, if there exists an element $y \in H \backslash K$ of order less than or equal to $p^n$, it determines a homomorphism $\theta \colon C_{\lvert y \rvert} \to H$ with $\theta(1) = y$, and then we have that $1 \in C_{\lvert y \rvert} \backslash \theta^{-1}(K)$ and $\lvert 1 \rvert = \lvert y \rvert \leqslant p^n$. By the previous paragraph this means that $K \nleqslant_{A^n} H$.
\end{proof}

\begin{prop}
\label{propsuborderless}
Let $T$ be an $\Abp$-transfer system with $U_\Cycp^\Abp(T)=T^n$. For any $H \in \Abp$ let $S(n, H) \leqslant H$ denote the subgroup composed of the elements of $H$ of order less than or equal to $p^n$. Then $S(n, H) \leqslant_T H$.
\end{prop}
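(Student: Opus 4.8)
The plan is to reduce the statement to the cyclic case of Construction~\ref{constrIn} via the classification of finite abelian $p$-groups, and then reassemble the pieces using Lemma~\ref{lemmsection}. First, if $n = \infty$ then $S(\infty, H) = H$, so the claim is just reflexivity of $\leqslant_T$; hence I may assume $n \in \NN$. Next I would fix an isomorphism $H \cong \prod_{i=1}^k C_{p^{a_i}}$ with each $a_i \geqslant 1$. Since the order of a tuple in a finite abelian $p$-group is the maximum of the orders of its coordinates, an element of $\prod_{i=1}^k C_{p^{a_i}}$ has order at most $p^n$ exactly when each coordinate does; thus the fixed isomorphism carries $S(n, H)$ onto $\prod_{i=1}^k C_{p^{\min(a_i, n)}}$, that is, onto the product of the subgroups $S(n, C_{p^{a_i}}) = C_{p^{\min(a_i, n)}} \leqslant C_{p^{a_i}}$.

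The second step is to observe that $C_{p^{\min(a_i, n)}} \leqslant_T C_{p^{a_i}}$ for every $i$. Both of these groups are finite cyclic $p$-groups, so this relation is governed by the hypothesis $U_\Cycp^\Abp(T) = T^n$: it holds by reflexivity when $a_i \leqslant n$, and when $a_i > n$ it is the relation $C_{p^n} \leqslant_{T^n} C_{p^{a_i}}$, which is part of $T^n$ since $n \leqslant n \leqslant a_i$, by the description of $T^n$ in Construction~\ref{constrIn}. All the groups $C_{p^{\min(a_i,n)}} \leqslant C_{p^{a_i}}$ lie in $\Abp$, so Lemma~\ref{lemmsection}~iii) gives $\prod_{i=1}^k C_{p^{\min(a_i, n)}} \leqslant_T \prod_{i=1}^k C_{p^{a_i}}$. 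Transporting this along the fixed isomorphism $H \cong \prod_{i=1}^k C_{p^{a_i}}$, which preserves $\leqslant_T$ because $T$ is closed under pullback along that isomorphism and along its inverse by Condition~\ref{defitransferii} of Definition~\ref{defitransfer}, yields $S(n, H) \leqslant_T H$, as desired.

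I do not expect a genuine obstacle in this argument: the only mildly delicate points are verifying that the chosen isomorphism identifies $S(n, H)$ with the product of the $S(n, C_{p^{a_i}})$ — which is exactly the order computation above — and keeping track of the fact that every group that appears is a finite abelian $p$-group, so that Lemma~\ref{lemmsection} is indeed applicable.
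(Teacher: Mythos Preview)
Your argument is correct and is essentially the same as the paper's: decompose $H$ as a product of cyclic $p$-groups, identify $S(n,H)$ with the product of the $C_{p^{\min(a_i,n)}}$, use $U_\Cycp^\Abp(T)=T^n$ to get each factor relation, and then apply Lemma~\ref{lemmsection}~iii). The paper does not single out the $n=\infty$ case or the isomorphism transport explicitly, but those are cosmetic differences.
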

\begin{proof}
The group $H$ is of the form $\prod_{j \geqslant 0}^N C_{p^{m_j}}$ for $N, m_1, \dots, m_N \in \NN$, and $S(n, H)$ is precisely $\prod_{j \geqslant 0}^N C_{p^{\min(m_j, n)}} \leqslant \prod_{j \geqslant 0}^N C_{p^{m_j}}$. 

For every $0 \leqslant j \leqslant N$, we know that if $m_j > n$ then $C_{p^{\min(m_j, n)}} \leqslant_{T^n} C_{p^{m_j}}$ by construction of $T^n$, and if $m_j \leqslant n$ then $C_{p^{\min(m_j, n)}} = C_{p^{m_j}} \leqslant_{T^n} C_{p^{m_j}}$. By Lemma~\ref{lemmsection} iii) we obtain that $S(n, H) \leqslant_T H$.
\end{proof}

\begin{prop}
\label{propabp}
The functors $U_\Cycp^\Abp$ and $R_\Cycp^\Abp$ are inverse isomorphisms between the poset $I_\Cycp$ and the poset $I_\Abp$ of $\Abp$-transfer systems.
\end{prop}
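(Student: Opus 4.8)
The plan is to leverage the adjunction of Proposition~\ref{propadjoint} together with the explicit computations in Propositions~\ref{propRTn} and~\ref{propsuborderless}. Since $\Cycp \subseteq \Abp$, Proposition~\ref{propadjoint} already gives that $U_\Cycp^\Abp \circ R_\Cycp^\Abp$ is the identity on $I_\Cycp$, and that $R_\Cycp^\Abp$ is right adjoint to $U_\Cycp^\Abp$; the unit of this adjunction says precisely that $T \subseteq R_\Cycp^\Abp(U_\Cycp^\Abp(T))$ for every $\Abp$-transfer system $T$ (apply the adjunction bijection to the identity $U_\Cycp^\Abp(T) \subseteq U_\Cycp^\Abp(T)$). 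Both functors are order preserving by construction, so the only remaining task is to prove the reverse inclusion $R_\Cycp^\Abp(U_\Cycp^\Abp(T)) \subseteq T$, which would show $R_\Cycp^\Abp \circ U_\Cycp^\Abp = \id_{I_\Abp}$ and hence that the two functors are inverse isomorphisms.

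To establish this reverse inclusion I would fix an $\Abp$-transfer system $T$ and use Proposition~\ref{propcycp} to write $U_\Cycp^\Abp(T) = T^n$ for a unique $n \in \NN \cup \{\infty\}$, so that $R_\Cycp^\Abp(U_\Cycp^\Abp(T)) = R_\Cycp^\Abp(T^n) = A^n$. By Proposition~\ref{propRTn}, the relation $K \leqslant_{A^n} H$ holds exactly when $S(n,H) \leqslant K \leqslant H$ (the finite index requirement being automatic for finite groups). Thus it suffices to show: for every $H \in \Abp$ and every subgroup $K$ with $S(n,H) \leqslant K \leqslant H$, one has $K \leqslant_T H$. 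The case $n = \infty$ forces $K = H$ and there is nothing to prove, so I may assume $n$ is finite.

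The crucial idea is to realize the pair $(K,H)$ as a pullback of a ``bounded order'' pair and then invoke Proposition~\ref{propsuborderless} and closure under pullback. Concretely, let $p^n K = \{p^n k \mid k \in K\} \leqslant H$ and consider the quotient homomorphism $\phi \colon H \to \bar H \defeq H / p^n K$, where $\bar H \in \Abp$. I claim that $\phi^{-1}(S(n,\bar H)) = K$: unwinding the definitions, $\phi^{-1}(S(n,\bar H)) = \{h \in H \mid p^n h \in p^n K\}$, which visibly contains $K$; conversely, if $p^n h = p^n k$ for some $k \in K$ then $h - k \in H[p^n] = S(n,H) \subseteq K$, so $h \in K$. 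Granting the claim, Proposition~\ref{propsuborderless} applied to $\bar H$ gives $S(n,\bar H) \leqslant_T \bar H$, and closure of $T$ under pullback along $\phi$ (Condition~\ref{defitransferii} of Definition~\ref{defitransfer}) yields $K = \phi^{-1}(S(n,\bar H)) \leqslant_T H$, as desired. The only genuinely new ingredient is this pullback construction; the identification $H[p^n] = S(n,H)$ and the adjunction bookkeeping are routine. I expect the main point to watch to be that the equality $\phi^{-1}(S(n,\bar H)) = K$ really does use the hypothesis $S(n,H) \subseteq K$ (in the converse inclusion), and that the degenerate case $n = \infty$ is disposed of separately.
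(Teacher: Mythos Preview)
Your proposal is correct and follows essentially the same route as the paper: both use the adjunction from Proposition~\ref{propadjoint} to reduce to showing $A^n \subset T$, and then realize an arbitrary pair $K \leqslant_{A^n} H$ as the pullback of $S(n,H/p^nK) \leqslant_T H/p^nK$ along the quotient map, invoking Propositions~\ref{propRTn} and~\ref{propsuborderless} exactly as you do. The only cosmetic difference is that the paper folds the case $n=\infty$ into the same argument by declaring $p^\infty K$ to be the trivial subgroup, whereas you dispose of it separately.
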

\begin{proof}
By Proposition~\ref{propadjoint} the functor $U_\Cycp^\Abp \colon I_\Abp \to I_\Cycp$ is left adjoint to $R_\Cycp^\Abp \colon I_\Cycp \to I_\Abp$, and $U_\Cycp^\Abp \circ R_\Cycp^\Abp = \id_{I_\Cycp}$. Let $T \in I_\Abp$, by adjointness $T \subset R_\Cycp^\Abp(U_\Cycp^\Abp(T))$. We know by the classification of $\Cycp$-transfer systems given in Proposition~\ref{propcycp} that $U_\Cycp^\Abp(T)=T^n$ for some $n \in \NN \cup \{\infty\}$. Let $A^n = R_\Cycp^\Abp(T^n) = R_\Cycp^\Abp(U_\Cycp^\Abp(T))$. We only have left to show that $A^n \subset T$.

Assume that $K \leqslant_{A^n} H$. By Proposition~\ref{propRTn} this means that $K$ contains all elements of $H$ of order less than or equal to $p^n$. Let $p^n K \leqslant K \leqslant H$ be the subgroup obtained by multiplication with $p^n$ under the $\ZZ$-module structure of $H$ (if $n = \infty$ let $p^n K$ denote the trivial subgroup). Let $f \colon H \to H / (p^n K)$ denote the projection to the quotient. Finally let $S= S(n, H / (p^n K)) \leqslant H / (p^n K)$ denote the subgroup composed of the elements of order less than or equal to $p^n$ of the group $H / (p^n K)$. By Proposition~\ref{propsuborderless} we know that $S \leqslant_T H / (p^n K)$.

Now let $x \in K$, then $f(p^n x)= 0$ and so $f(x) \in S$. Conversely, for $y \in H$, if $f(y) \in S= S(n, H / (p^n K))$ then $p^n f(y) = 0$, so $p^n y \in p^n K$. This means that there exists some $k \in K$ such that $p^n y = p^n k$. In that case $y - k$ has order less than or equal to $p^n$, so it belongs to $K$ as it contains all elements of $H$ of order less than or equal to $p^n$ by Proposition~\ref{propRTn}, and thus $y \in K$. Therefore $f^{-1}(S) = K$, and by pulling back along $f$ since $S \leqslant_T H / (p^n K)$ we obtain that $K \leqslant_T H$. This shows that $A^n = R_\Cycp^\Abp(U_\Cycp^\Abp(T)) \subset T$, and we conclude that $R_\Cycp^\Abp(U_\Cycp^\Abp(T)) = T$ and thus $U_\Cycp^\Abp$ and $R_\Cycp^\Abp$ are inverse isomorphisms.
\end{proof}

\section{Global transfer systems for \texorpdfstring{$\Ab$}{Ab}}
\label{sectionAb}

Let $P$ denote the set of all prime numbers and let $\Ab$ denote the family of finite abelian groups. Let $U \colon I_\Ab \to \prod_{p \in P} I_\Abp$ denote the functor induced by the functors $U^\Ab_\Abp \colon I_\Ab \to I_\Abp$. Let $R \colon  \prod_{p \in P} I_\Abp \to I_\Ab$ denote the functor given by $R((T_p)_{p \in P})= \cap_{p \in P} R^\Ab_\Abp(T_p)$.

\begin{prop}
\label{propab}
The functors $U$ and $R$ are inverse isomorphisms between the poset $\prod_{p \in P} I_\Abp$ and the poset $I_\Ab$ of $\Ab$-transfer systems.
\end{prop}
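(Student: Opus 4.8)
The plan is to exhibit $U$ and $R$ as mutually inverse order-preserving maps. Order-preservation is routine: $R_\Abp^\Ab$ preserves inclusions by Definition~\ref{defiR}, arbitrary intersections of transfer systems are transfer systems, and $U_\Abp^\Ab$ is just restriction of relations; so $U$ and $R$ are functors of posets. It then suffices to check the two composites are identities at the level of underlying relations, i.e. $U \circ R = \id_{\prod_{p\in P} I_\Abp}$ and $R \circ U = \id_{I_\Ab}$.

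For $U \circ R = \id$, fix $(T_p)_{p \in P}$ and a prime $q$, and set $S \defeq \bigcap_{p \in P} R_\Abp^\Ab(T_p)$; I must show the restriction $U_\Abq^\Ab(S)$ equals $T_q$. Since restriction of a relation to $\Abq$ commutes with intersection, $U_\Abq^\Ab(S) = \bigcap_{p\in P} U_\Abq^\Ab(R_\Abp^\Ab(T_p))$, so it is enough to identify each factor. For $p = q$ this is $U_\Abq^\Ab \circ R_\Abq^\Ab(T_q) = T_q$ by Proposition~\ref{propadjoint}. For $p \neq q$, every homomorphism from a finite abelian $p$-group to a finite abelian $q$-group is trivial (the image has order dividing a power of $p$ and a power of $q$), so for $K \leqslant H$ in $\Abq$ the only preimages occurring in condition ii) of Definition~\ref{defiR} are the whole domain $G$, and $G \leqslant_{T_p} G$ by reflexivity; hence $U_\Abq^\Ab(R_\Abp^\Ab(T_p))$ is the maximal transfer system $\All$ on $\Abq$. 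Intersecting over all $p$ leaves exactly $T_q$.

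For $R \circ U = \id$, fix $T \in I_\Ab$. The inclusion $T \subset \bigcap_{p\in P} R_\Abp^\Ab(U_\Abp^\Ab(T))$ is the unit of the adjunction of Proposition~\ref{propadjoint}, applied for each $p$. For the reverse inclusion, take $K \leqslant H$ in $\Ab$ with $K \leqslant_{R_\Abp^\Ab(U_\Abp^\Ab(T))} H$ for every prime $p$, and write $H \cong \prod_{p \mid |H|} H_{(p)}$ for the (finite) decomposition into $p$-primary parts, with the induced decomposition $K \cong \prod_{p \mid |H|} K_{(p)}$ where $K_{(p)} = K \cap H_{(p)}$. Pulling back along the inclusion $H_{(p)} \hookrightarrow H$ in condition ii) of Definition~\ref{defiR} gives $K_{(p)} = K \cap H_{(p)} \leqslant_{U_\Abp^\Ab(T)} H_{(p)}$, and since $U_\Abp^\Ab$ is restriction this means $K_{(p)} \leqslant_T H_{(p)}$. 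Now Lemma~\ref{lemmsection} iii) applied to this finite family yields $\prod_{p\mid|H|} K_{(p)} \leqslant_T \prod_{p\mid|H|} H_{(p)}$, and since $T$ is closed under isomorphisms this is exactly $K \leqslant_T H$.

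I expect the reverse inclusion in $R \circ U = \id$ to be the main obstacle. The idea is to split the pair $(K,H)$ along the primary decomposition of $H$, use that pulling back along the Sylow inclusions sends the $p$-part of the pair into $U_\Abp^\Ab(T)$ (so the $p$-local analysis of the previous sections can be invoked), and then reassemble using closure under finite products from Lemma~\ref{lemmsection}. Everything else — order-preservation, the triviality of cross-prime homomorphisms, and the adjunction inputs — is routine given the results already established, in particular Propositions~\ref{propadjoint} and~\ref{propabp}.
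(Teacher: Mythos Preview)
Your proof is correct and follows essentially the same approach as the paper: primary decomposition of $H$, triviality of cross-prime homomorphisms, the adjunction of Proposition~\ref{propadjoint}, and reassembly via Lemma~\ref{lemmsection}~iii). One small remark: despite your closing paragraph, the argument does not actually invoke Proposition~\ref{propabp} anywhere---only Proposition~\ref{propadjoint} is needed.
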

\begin{proof}
We first check that $U$ is left adjoint to $R$. Let $T$ be an $\Ab$-transfer system, and let $(T_p)_{p \in P} \in \prod_{p \in P} I_\Abp$. Then $T \subset R((T_p)_{p \in P})$ if and only if $T \subset R^\Ab_\Abp(T_p)$ for each prime $p$, which by adjointness of $U^\Ab_\Abp$ and $R^\Ab_\Abp$ holds if and only if $U(T) \subset (T_p)_{p \in P}$.

Next, by adjointness we know that $T \subset R(U(T))$ for any $T \in I_\Ab$. Let $K \leqslant H$ be finite abelian groups such that $K \leqslant_{R(U(T))} H$. The finite abelian group $H$ decomposes as a product of finite abelian $p$-groups $\prod_{p \in P} H_p$. Let $K_p=K \cap H_p$, then we also have that $K=\prod_{p \in P} K_p$. Let $\theta \colon H_p \to H$ denote the inclusion. Then $\theta^{-1}(K)=K_p$, so $K_p \leqslant_{R(U(T))} H_p$, thus $K_p \leqslant_{R^\Ab_\Abp(U^\Ab_\Abp(T))} H_p$ and $K_p \leqslant_{U^\Ab_\Abp(T)} H_p$ because $K_p, H_p \in \Abp$ and $U_\Abp^\Ab \circ R_\Abp^\Ab = \id_\Abp$. This implies that $K_p \leqslant_T H_p$ for each prime $p$, and by Lemma~\ref{lemmsection} iii) we obtain that $K \leqslant_T H$, and thus $R(U(T)) = T$.

For the other composition, we note that again by adjointness $U(R((T_p)_{p \in P})) \subset (T_p)_{p \in P}$ for each $(T_p)_{p \in P} \in \prod_{p \in P} I_\Abp$. For each prime let $K_p\leqslant H_p$ be finite abelian $p$-groups such that $K_p \leqslant_{T_p} H_p$. Then $K_p \leqslant_{R^\Ab_\Abp(T_p)} H_p$ since $U_\Abp^\Ab \circ R_\Abp^\Ab = \id_\Abp$, and $K_p \leqslant_{R^\Ab_\Abq(T_q)} H_p$ for $q \neq p$ by construction of $R^\Ab_\Abq$, as there are no non-trivial homomorphisms from $q$-groups to $p$-groups. Thus $K_p \leqslant_{R((T_p)_{p \in P})} H_p$ so $T_p \subset U^\Ab_\Abp(R((T_p)_{p \in P}))$ and $(T_p)_{p \in P} = U(R((T_p)_{p \in P}))$.
\end{proof}

\begin{rem}
Note that this splitting of transfer systems with respect to different primes does not happen in the $G$-equivariant case. It is only caused by the additional requirements of global compatibility that restrict the possible examples. In \cite{balchinCpqr} Balchin, Bearup, Pech and Roitzheim computed the size of the poset $I_{C_{pqr}}$ of $C_{pqr}$-transfer systems for $p, q, r$ three different primes. This poset is much bigger than the product $I_{C_p} \times I_{C_q} \times I_{C_r}$.
\end{rem}

\section{Global transfer systems for \texorpdfstring{$\AbLie$}{AbLie}}
\label{sectionAbLie}

\begin{prop}
\label{propablie}
The functors $U_\Ab^\AbLie$ and $R_\Ab^\AbLie$ are inverse isomorphisms between the poset $I_\Ab$ and the poset $I_\AbLie$ of $\AbLie$-transfer systems.
\end{prop}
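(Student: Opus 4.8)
The plan is to combine the adjunction of Proposition~\ref{propadjoint} with the structure theory of compact abelian Lie groups. Since $\Ab \subset \AbLie$, Proposition~\ref{propadjoint} already gives that $U_\Ab^\AbLie \circ R_\Ab^\AbLie = \id_{I_\Ab}$ and that $R_\Ab^\AbLie$ is right adjoint to $U_\Ab^\AbLie$; in particular $T \subset R_\Ab^\AbLie(U_\Ab^\AbLie(T))$ for every $\AbLie$-transfer system $T$. So the only thing left to prove is the reverse inclusion $R_\Ab^\AbLie(U_\Ab^\AbLie(T)) \subset T$, which then yields $R_\Ab^\AbLie \circ U_\Ab^\AbLie = \id_{I_\AbLie}$ and finishes the argument.

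To prove this, fix $T \in I_\AbLie$, write $S = U_\Ab^\AbLie(T)$, and suppose $K \leqslant_{R_\Ab^\AbLie(S)} H$ with $H \in \AbLie$ and $K$ a finite index closed subgroup of $H$. Let $H_0$ denote the identity component of $H$. The key structural input is twofold: first, because $H/K$ is a finite discrete group, the connected subgroup $H_0$ maps trivially to it, so $H_0 \subset K$; second, $H_0$ is a torus, hence a divisible abelian group, so the extension $1 \to H_0 \to H \to \pi_0(H) \to 1$ splits, with a section $s \colon \pi_0(H) \to H$ that is automatically a morphism of Lie groups since $\pi_0(H)$ is discrete. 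Set $\bar H = H/H_0 = \pi_0(H)$, a finite abelian group, let $q \colon H \to \bar H$ be the quotient, and put $\bar K = q(K) \leqslant \bar H$; then $q^{-1}(\bar K) = K$ because $H_0 \subset K$, and $q \circ s = \id_{\bar H}$.

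Now apply the defining condition of $R_\Ab^\AbLie(S)$ from Definition~\ref{defiR} to the homomorphism $s \colon \bar H \to H$, whose source $\bar H$ lies in $\Ab$: this gives $s^{-1}(K) \leqslant_S \bar H$, and $s^{-1}(K) = (q \circ s)^{-1}(\bar K) = \bar K$. Since $\bar K, \bar H \in \Ab$, Lemma~\ref{lemmU} identifies $S$ on $\Ab$ with the restriction of the relation $T$, so $\bar K \leqslant_T \bar H$. Finally, Lemma~\ref{lemmsection} i) applied to $q \colon H \to \bar H$ with its section $s$ turns $\bar K \leqslant_T \bar H$ into $q^{-1}(\bar K) = K \leqslant_T H$ (equivalently, one pulls $\bar K \leqslant_T \bar H$ back along $q$ directly using Condition~\ref{defitransferii} of Definition~\ref{defitransfer}). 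This proves $R_\Ab^\AbLie(S) \subset T$, hence equality with $T$, and together with $U_\Ab^\AbLie \circ R_\Ab^\AbLie = \id_{I_\Ab}$ we conclude that $U_\Ab^\AbLie$ and $R_\Ab^\AbLie$ are inverse isomorphisms.

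The only delicate point, and the one I would take care over, is the splitting $H \cong H_0 \times \pi_0(H)$: it uses that a torus is divisible (so the extension splits as abelian groups) and that the resulting section is automatically a continuous, hence smooth, homomorphism because $\pi_0(H)$ carries the discrete topology. Everything else is formal bookkeeping with preimages ($q^{-1}(\bar K) = K$ and $s^{-1}(K) = \bar K$) and the adjunction already established. Conceptually, this step records the fact that every finite index closed subgroup of an abelian compact Lie group is the preimage, under a split quotient, of a finite index subgroup of the finite component group, which is precisely why the classification of $\AbLie$-transfer systems collapses onto that of $\Ab$-transfer systems.
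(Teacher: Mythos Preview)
Your proof is correct and follows essentially the same route as the paper: reduce via the adjunction of Proposition~\ref{propadjoint} to showing $R_\Ab^\AbLie(U_\Ab^\AbLie(T)) \subset T$, then use $H_0 \subset K$ and the splitting $H \cong H_0 \times \pi_0(H)$ together with Lemma~\ref{lemmsection}~i) to pass to the finite component group and back. The only cosmetic difference is that you extract $\bar K \leqslant_S \bar H$ directly from Definition~\ref{defiR} applied to the section $s$, whereas the paper applies Lemma~\ref{lemmsection}~i) twice (once in each direction); your extra justification of the splitting via divisibility of the torus is a nice touch the paper leaves implicit.
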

\begin{proof}
By Proposition~\ref{propadjoint} the functor $U^\AbLie_\Ab \colon I_\AbLie \to I_\Ab$ is left adjoint to $R^\AbLie_\Ab \colon I_\Ab \to I_\AbLie$, and $U^\AbLie_\Ab \circ R^\AbLie_\Ab = \id_{I_\Ab}$. Let $T$ be an $\AbLie$-transfer system, then by adjointness $T \subset R^\AbLie_\Ab(U^\AbLie_\Ab(T))$. Let $S= R^\AbLie_\Ab(U^\AbLie_\Ab(T))$. We only have left to show that $S \subset T$.

Consider an abelian compact Lie group $H$ and a finite index closed subgroup $K$ such that $K \leqslant_S H$, and let $H_0$ denote the identity component of $H$. Since $K$ has finite index in $H$, $H_0 \subset K$. Because $H$ is an abelian compact Lie group, it is isomorphic to $H_0 \times A$, where $A$ is a finite abelian group. Then there exists a section $s \colon H/H_0 \cong A \to H_0 \times A \cong H$ of the projection $\pi \colon H \to H/H_0$. By Lemma~\ref{lemmsection}~i), since $K \leqslant_S H$, also $K/H_0 \leqslant_S H/H_0$. But now $H/H_0 \cong A$ is a finite abelian group and $U^\AbLie_\Ab \circ R^\AbLie_\Ab = \id_{I_\Ab}$, so $K/H_0 \leqslant_T H/H_0$, and using Lemma~\ref{lemmsection}~i) again we obtain that $K \leqslant_T H$, proving that $T  = S = R^\AbLie_\Ab(U^\AbLie_\Ab(T))$.
\end{proof}

The results that we have presented until now can be summarized in the following diagram, where the arrows represent the respective forgetful functors, or products of them. It is straightforward to check that this diagram commutes, since both possible compositions are just given by the forgetful functors to the posets $I_\Cycp$.
\[\begin{tikzcd}
	& {I_\AbLie} \\
	& {I_\Ab} \\
	{\prod_{p \in P} I_\Abp} && {I_\Cyc} \\
	& {\prod_{p \in P} I_\Cycp}
	\arrow["\cong", from=1-2, to=2-2]
	\arrow["\cong", from=2-2, to=3-1]
	\arrow["\cong", from=3-1, to=4-2]
	\arrow[from=3-3, to=4-2]
	\arrow["U^\Ab_\Cyc", from=2-2, to=3-3]
\end{tikzcd}\]

In Propositions~\ref{propabp}, \ref{propab}, and \ref{propablie} we proved that the marked arrows are isomorphisms of posets. We also know that $U^\Ab_\Cyc \circ R^\Ab_\Cyc = \id_{I_\Cyc}$. We can use these two facts and the commutativity of this diagram to obtain that $R^\Ab_\Cyc \circ U^\Ab_\Cyc$ is an isomorphism, and thus $U^\Ab_\Cyc$ is also an isomorphism of posets. Proposition~\ref{propcycp} characterizes the poset $I_\Cycp$ for prime $p$, and by putting together these results we obtain our main theorem.

\begin{thm}
\label{thmmain}
The poset $I_\AbLie$ of global transfer systems of abelian compact Lie groups is isomorphic to the poset $I_\Cyc$ of global transfer systems of finite cyclic groups, which is itself isomorphic to the product $(\prod_{p \in P} \NN \cup \{\infty\})^\opp$, where $P$ is the set of prime numbers.
\end{thm}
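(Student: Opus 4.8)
The plan is to deduce the theorem by stringing together the poset isomorphisms established in Sections~\ref{sectionCycp}--\ref{sectionAbLie}, and then running a short diagram chase for the assertion about $I_\Cyc$.

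First I would build the isomorphism for $I_\AbLie$. By Proposition~\ref{propablie} the forgetful functor $U^\AbLie_\Ab$ is an isomorphism $I_\AbLie \to I_\Ab$. By Proposition~\ref{propab} the functor $U$ induced by the $U^\Ab_\Abp$ is an isomorphism $I_\Ab \to \prod_{p \in P} I_\Abp$. By Proposition~\ref{propabp} each $U_\Cycp^\Abp \colon I_\Abp \to I_\Cycp$ is an isomorphism, so their product $\prod_{p \in P} U_\Cycp^\Abp$ is an isomorphism $\prod_{p \in P} I_\Abp \to \prod_{p \in P} I_\Cycp$. Finally Proposition~\ref{propcycp} gives $I_\Cycp \cong (\NN \cup \{\infty\})^\opp$ for every prime $p$. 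Composing these four isomorphisms yields $I_\AbLie \cong \prod_{p \in P} (\NN \cup \{\infty\})^\opp$, and since forming opposites commutes with products of posets this is exactly $(\prod_{p \in P} \NN \cup \{\infty\})^\opp$.

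Next I would handle $I_\Cyc$, which is the only part not literally supplied by the cited propositions, since $I_\Cyc$ has not been written as a product. Here I would use the commutative square displayed just above the theorem, whose commutativity is immediate because both ways around send an $\Ab$-transfer system to the induced family of $\Cycp$-transfer systems. Let $\phi \colon I_\Ab \to \prod_{p \in P} I_\Cycp$ denote the composite down the left-hand side of that square; by the previous paragraph $\phi$ is an isomorphism, and the square tells us that $\phi$ also equals the forgetful functor $I_\Cyc \to \prod_{p \in P} I_\Cycp$ precomposed with $U^\Ab_\Cyc \colon I_\Ab \to I_\Cyc$. On the other hand Proposition~\ref{propadjoint}, applied to $\Cyc \subset \Ab$, gives $U^\Ab_\Cyc \circ R^\Ab_\Cyc = \id_{I_\Cyc}$. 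Combining these, for any $T \in I_\Ab$ one gets $\phi(R^\Ab_\Cyc(U^\Ab_\Cyc(T))) = \phi(T)$, and injectivity of $\phi$ forces $R^\Ab_\Cyc \circ U^\Ab_\Cyc = \id_{I_\Ab}$; hence $U^\Ab_\Cyc$ and $R^\Ab_\Cyc$ are mutually inverse isomorphisms and $I_\Cyc \cong I_\Ab \cong (\prod_{p \in P} \NN \cup \{\infty\})^\opp$.

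I do not expect a genuine obstacle: all the real content sits in Propositions~\ref{propcycp}, \ref{propabp}, \ref{propab}, and \ref{propablie}, and the theorem is their formal consequence. The single subtlety worth flagging is that one must not stop at observing that $U^\Ab_\Cyc$ is a bijection — a bijective order-preserving map of posets need not be an isomorphism — so the argument genuinely needs the explicit two-sided inverse $R^\Ab_\Cyc$ produced by the diagram chase above.
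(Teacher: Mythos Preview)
Your proposal is correct and follows essentially the same route as the paper: compose the isomorphisms of Propositions~\ref{propablie}, \ref{propab}, \ref{propabp}, and \ref{propcycp}, and then use the commuting square together with $U^\Ab_\Cyc \circ R^\Ab_\Cyc = \id_{I_\Cyc}$ to conclude that $U^\Ab_\Cyc$ is an isomorphism. You spell out the diagram chase for $R^\Ab_\Cyc \circ U^\Ab_\Cyc = \id_{I_\Ab}$ more explicitly than the paper does, and your closing remark that a bijective order-preserving map need not be a poset isomorphism is a valid caution, but the argument is the same.
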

\begin{proof}
As we just explained, Propositions~\ref{propabp}, \ref{propab}, and \ref{propablie} together prove that $I_\AbLie \cong I_\Cyc \cong \prod_{p \in P} I_\Cycp$, and by Proposition~\ref{propcycp} this is isomorphic to $(\prod_{p \in P} \NN \cup \{\infty\})^\opp$.
\end{proof}

\begin{rem}
We can use the previous results not only to obtain an explicit description of the poset $I_\AbLie$ of all global transfer systems of abelian compact Lie groups, but also to describe the subgroup pairs contained in each such transfer system. 

The divisibility lattice of $\NN$ embeds into $\prod_{p \in P} \NN \cup \{\infty\}$ as prime factorizations. Thus we can interpret elements of $\prod_{p \in P} \NN \cup \{\infty\}$ as "generalized prime factorizations", which include possibly infinite powers of an infinite amount of different prime numbers. Let $N = \prod_{p \in P} p^{N_p}$ be such an element of $\prod_{p \in P} \NN \cup \{\infty\}$. We will use $T^N$ to denote the global transfer system of abelian compact Lie groups which is the image of $N$ under the chain of isomorphisms that we have constructed. Proposition~\ref{propRTn} characterizes which subgroup pairs belong to a global transfer system of finite abelian $p$-groups. Tracing this back through the chain of isomorphisms we obtain that $K \leqslant_{T^N} H$ for abelian compact Lie groups $K \leqslant H$ precisely if $K$ contains all elements of $H$ whose order divides $N$. We say that a non-zero natural number divides $N$ if its prime factorization is less than or equal to $N$ in $\prod_{p \in P} \NN \cup \{\infty\}$. This completely characterizes the subgroup pairs that belong to $T^N$. Note that if $N \leqslant N'$ as prime factorizations, then $T^{N'} \subset T^N$.
\end{rem}

\section{Non-abelian case}
\label{sectionnonabelian}

One might wonder if the transfers of abelian groups capture all of the information contained in a global transfer system. In this section we will prove that this is not the case, that there are different global transfer systems of compact Lie groups which coincide when restricted to abelian compact Lie groups. To do this we use the description of the global transfer systems for the family of subgroups of $\Sigma_3$ the symmetric group on three elements that we gave in \cite[Example~7.4]{globalNin}.

\begin{prop}
The forgetful functor $U^\Lie_\AbLie$ is not injective. Since $U^\Lie_\AbLie \circ R^\Lie_\AbLie = \id_\AbLie$, this implies that $R^\Lie_\AbLie$ is not surjective.
\end{prop}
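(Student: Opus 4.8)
The plan is to realise the failure of injectivity using the symmetric group $\Sigma_3$, whose abelianisation is $C_2$. Write $\Gfam_{\Sigma_3}$ for the family of subgroups of $\Sigma_3$, so that $\Gfam_{\Sigma_3}$ consists of the groups isomorphic to $e$, $C_2$, $C_3$ or $\Sigma_3$. From the description of $I_{\Gfam_{\Sigma_3}}$ in \cite[Example~7.4]{globalNin} I would pick out two $\Gfam_{\Sigma_3}$-transfer systems: the maximal one $\All$, which relates every pair consisting of a finite index closed subgroup, and the transfer system $T$ obtained from $\All$ by removing precisely the pairs $(C_2, \Sigma_3)$ in which $C_2$ is an order two subgroup. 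Equivalently, $T$ consists of those pairs $(K, H) \in \All$ with $K$ normal in $H$, since the only non-normal inclusions of groups in $\Gfam_{\Sigma_3}$ are those of an order two subgroup into $\Sigma_3$. One checks directly (or reads off \cite[Example~7.4]{globalNin}) that $T$ is a $\Gfam_{\Sigma_3}$-transfer system, and plainly $T \neq \All$.

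Next I would transport $T$ and $\All$ into $I_\Lie$ along the functor $R^\Lie_{\Gfam_{\Sigma_3}} \colon I_{\Gfam_{\Sigma_3}} \to I_\Lie$ of Definition~\ref{defiR}. By Proposition~\ref{propadjoint} this functor is an embedding, hence injective, so $R^\Lie_{\Gfam_{\Sigma_3}}(T)$ and $R^\Lie_{\Gfam_{\Sigma_3}}(\All)$ are distinct elements of $I_\Lie$. It then remains to show that these two global transfer systems have the same restriction to abelian compact Lie groups. Every homomorphism from a group in $\Gfam_{\Sigma_3}$ to a group in $\AbLie$ trivially factors through its own source, which is a compact Lie group, so Lemma~\ref{lemmRcomp} applies with $\dami = \Gfam_{\Sigma_3}$, $\eami = \Lie$, $\fami = \AbLie$; combined with Lemma~\ref{lemmU}, which identifies $U^\Lie_\AbLie$ with $R^\AbLie_\Lie$, this yields $U^\Lie_\AbLie \circ R^\Lie_{\Gfam_{\Sigma_3}} = R^\AbLie_{\Gfam_{\Sigma_3}}$. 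So it suffices to prove $R^\AbLie_{\Gfam_{\Sigma_3}}(T) = R^\AbLie_{\Gfam_{\Sigma_3}}(\All)$.

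For this, let $K \leqslant H$ be abelian compact Lie groups with $K$ of finite index. By Definition~\ref{defiR}, $K$ is related to $H$ in $R^\AbLie_{\Gfam_{\Sigma_3}}(S)$, for $S \in \{T, \All\}$, exactly when $\theta^{-1}(K) \leqslant_S G$ for every homomorphism $\theta \colon G \to H$ with $G \in \Gfam_{\Sigma_3}$. The crucial point is that, since $H$ is abelian, $K$ is normal in $H$, and therefore $\theta^{-1}(K)$ is the kernel of the composite $G \to H \to H/K$, hence a normal subgroup of $G$. Consequently the only relations of $S$ that are ever invoked are those between a subgroup and a group that contains it as a normal subgroup, and $T$ and $\All$ agree on all such pairs by construction. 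Hence the two defining conditions are literally the same, so $R^\AbLie_{\Gfam_{\Sigma_3}}(T) = R^\AbLie_{\Gfam_{\Sigma_3}}(\All)$, and therefore $U^\Lie_\AbLie$ sends the distinct global transfer systems $R^\Lie_{\Gfam_{\Sigma_3}}(T)$ and $R^\Lie_{\Gfam_{\Sigma_3}}(\All)$ to the same $\AbLie$-transfer system. This proves that $U^\Lie_\AbLie$ is not injective. The final claim, that $R^\Lie_\AbLie$ is not surjective, is then purely formal: $U^\Lie_\AbLie \circ R^\Lie_\AbLie = \id$ by Proposition~\ref{propadjoint}, and if $R^\Lie_\AbLie$ were also surjective it would be a two-sided inverse of $U^\Lie_\AbLie$, forcing $U^\Lie_\AbLie$ to be injective.

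I expect the only real content to be the observation in the third paragraph: the relation $C_2 \leqslant \Sigma_3$ that distinguishes $\All$ from $T$ cannot be detected after restricting to abelian targets, because any homomorphism out of $\Sigma_3$ into an abelian group kills the commutator subgroup $A_3$, so preimages of subgroups are automatically among the normal subgroups $\{e, A_3, \Sigma_3\}$ of $\Sigma_3$. Everything else — that $T$ is a transfer system, that $R^\Lie_{\Gfam_{\Sigma_3}}$ is an embedding, and that the composite $U^\Lie_\AbLie \circ R^\Lie_{\Gfam_{\Sigma_3}}$ collapses to $R^\AbLie_{\Gfam_{\Sigma_3}}$ via Lemma~\ref{lemmRcomp} — is routine bookkeeping with the machinery already set up in the paper.
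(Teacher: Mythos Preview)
Your proof is correct and follows the same route as the paper: both pick the two $\Gfam_{\Sigma_3}$-transfer systems $\All$ and $T$ from \cite[Example~7.4]{globalNin}, push them into $I_\Lie$ along the embedding $R^\Lie_{\Gfam_{\Sigma_3}}$, and then argue that their images in $I_\AbLie$ agree. The only cosmetic difference is that the paper factors $R^\AbLie_{\Gfam_{\Sigma_3}}$ through an intermediate family $\fami_{C_2,C_3}$ (applying Lemma~\ref{lemmRcomp} a second time and noting $\Sigma_3\notin\fami_{C_2,C_3}$), whereas you bypass this by observing directly that preimages under homomorphisms into an abelian group are normal, so the non-normal relation $C_2\leqslant\Sigma_3$ distinguishing $T$ from $\All$ is never tested.
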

\begin{proof}
Let $\fami_{\Sigma_3}$ be the family of groups isomorphic to $\Sigma_3$ or to a subgroup of $\Sigma_3$. Recall that an $\fami_{\Sigma_3}$-transfer system is different to a $\Sigma_3$-transfer system, see Remark~\ref{remdefGtransfer}. Let $\fami_{C_2, C_3}$ be the family of groups isomorphic to either $C_2$, $C_3$, or the trivial group. We have the following diagram of functors.

\[\begin{tikzcd}[row sep=large,column sep=large,every label/.append
style={font=\large},every matrix/.append style={nodes={font=\large}}]
	I_{\fami_{\Sigma_3}} & I_\Lie \\
	I_{\fami_{C_2, C_3}} & I_\AbLie
	\arrow["{U^{\fami_{\Sigma_3}}_{\fami_{C_2, C_3}}}"', from=1-1, to=2-1]
	\arrow["{R^\AbLie_{\fami_{C_2, C_3}}}"', from=2-1, to=2-2]
	\arrow["{R^\AbLie_{\fami_{\Sigma_3}}}"{description}, from=1-1, to=2-2]
	\arrow["{U^\Lie_\AbLie}", from=1-2, to=2-2]
	\arrow["{R^\Lie_{\fami_{\Sigma_3}}}", from=1-1, to=1-2]
\end{tikzcd}\]

The upper triangle in the diagram commutes by Lemma~\ref{lemmRcomp}, since $U^\Lie_\AbLie=R^\AbLie_\Lie$ and any homomorphism from a subgroup of $\Sigma_3$ to an abelian compact Lie group obviously factors through a compact Lie group. The lower triangle similarly commutes by Lemma~\ref{lemmRcomp}, since any homomorphism from a subgroup of $\Sigma_3$ to an abelian compact Lie group factors through $C_2$ or $C_3$.

In \cite[Example 7.4]{globalNin} we computed $I_{\fami_{\Sigma_3}}$ the poset of $\fami_{\Sigma_3}$-transfer systems. There are five $\fami_{\Sigma_3}$-transfer systems, and two of them coincide when restricted to the family $\fami_{C_2, C_3}$. The first one is $\All$ the greatest $\fami_{\Sigma_3}$-transfer system, such that $K \leqslant_\All H$ for all $H$ isomorphic to a subgroup of $\Sigma_3$ and all $K \leqslant H$. The second one is the $\fami_{\Sigma_3}$-transfer system $T$ such that $K \leqslant_T H$ unless $H$ is isomorphic to $\Sigma_3$ and $K$ has order two. Both of these restrict to the greatest $\fami_{C_2, C_3}$-transfer system, as $\Sigma_3 \notin \fami_{C_2, C_3}$. This means that
\[U^\Lie_\AbLie(R^\Lie_{\fami_{\Sigma_3}}(T))= R^\AbLie_{\fami_{C_2, C_3}}(U^{\fami_{\Sigma_3}}_{\fami_{C_2, C_3}}(T)) = R^\AbLie_{\fami_{C_2, C_3}}(U^{\fami_{\Sigma_3}}_{\fami_{C_2, C_3}}(\All)) = U^\Lie_\AbLie(R^\Lie_{\fami_{\Sigma_3}}(\All)).\]
Since $R^\Lie_{\fami_{\Sigma_3}}$ is an embedding, $R^\Lie_{\fami_{\Sigma_3}}(\All)$ and $R^\Lie_{\fami_{\Sigma_3}}(T)$ are different $\Lie$-transfer systems, which coincide when restricted to $\AbLie$.
\end{proof}

With this we have proven that the poset $I_\Lie$ is strictly bigger than the poset $I_\AbLie$. 

\printbibliography

\end{document}